\numberwithin{equation}{section}
\definecolor{myred}{rgb}{0.75,0,0}
\definecolor{mygreen}{rgb}{0,0.5,0}
\definecolor{myblue}{rgb}{0,0,0.65}
    \def\CM{{\mathbb{C}}}
    \def\DM{{\mathbb{D}}}
  \def\gg{{\mathfrak g}}  \def\GM{{\mathbb{G}}}
    \def\KM{{\mathbb{K}}}
    \def\NM{{\mathbb{N}}}
    \def\OM{{\mathbb{O}}}
    \def\PM{{\mathbb{P}}}
    \def\QM{{\mathbb{Q}}}
    \def\RM{{\mathbb{R}}}
    \def\ZM{{\mathbb{Z}}}
\def\BB{{\mathbf B}}    
    \def\CC{{\mathcal{C}}}
    \def\DC{{\mathcal{D}}}
    \def\EC{{\mathcal{E}}}
    \def\FC{{\mathcal{F}}}
\def\GB{{\mathbf G}}    \def\GC{{\mathcal{G}}}
\def\HB{{\mathbf H}}    \def\HC{{\mathcal{H}}}
    \def\JC{{\mathcal{J}}}
    \def\KC{{\mathcal{K}}}
\def\LB{{\mathbf L}}    \def\LC{{\mathcal{L}}}
    \def\OC{{\mathcal{O}}}
    \def\PC{{\mathcal{P}}}
\def\TB{{\mathbf T}}    \def\TC{{\mathcal{T}}}
\def\XB{{\mathbf X}}    
\def\YB{{\mathbf Y}}
\def\ll{\lambda}
\def\LL{\Lambda}
\newcommand{\nc}{\newcommand} \newcommand{\renc}{\renewcommand}
\newcommand{\rdots}{\mathinner{ \mkern1mu\raise1pt\hbox{.}
    \mkern2mu\raise4pt\hbox{.}
    \mkern2mu\raise7pt\vbox{\kern7pt\hbox{.}}\mkern1mu}}
\DeclareMathOperator{\Res}{Res}
\def\ov{\overline}
\def\p{{}^p}
\def\to{\rightarrow}
\def\longto{\longrightarrow}
\nc{\triright}{\stackrel{[1]}{\to}}
\nc{\longtriright}{\stackrel{[1]}{\longto}}
\nc{\Br}{\mathcal{B}}
\nc{\HotRR}{{}_R\mathcal{K}_R}
\nc{\HotR}{\mathcal{K}_R}
\nc{\excise}[1]{}
\nc{\defect}{\text{df}}
\nc{\h}[1]{\underline{H}_{#1}}
\nc{\Ga}{\mathbb{G}_a} 
\nc{\Gm}{\mathbb{G}_m} 
\nc{\Perv}{{\mathbf{P}}}
\nc{\IH}{{\mathrm{IH}}}
\nc{\ic}{\mathbf{IC}}
\nc{\gl}{{\mathfrak{gl}}}
\renc{\sl}{{\mathfrak{sl}}}
\renc{\sp}{{\mathfrak{sp}}}
\newcommand{\Sp}{{\mathbf{Sp}}}
\nc{\HBM}{H^{BM}}
 \DeclareMathOperator{\Hom}{Hom}
\DeclareMathOperator{\supp}{supp} 
 \DeclareMathOperator{\Rep}{Rep}
\DeclareMathOperator{\Bs}{BS}
\newtheorem{thm}{Theorem}[section]
\newtheorem{lem}[thm]{Lemma}
\newtheorem{prop}[thm]{Proposition}
\newtheorem{cor}[thm]{Corollary}
\newtheorem{conj}[thm]{Conjecture}
\theoremstyle{definition}
\newtheorem{defi}[thm]{Definition}
\theoremstyle{remark}
\newtheorem{remark}[thm]{Remark}
\DeclareMathOperator{\Ext}{Ext}
\def\uk{\underline{k}}
\def\Gr{{\EuScript Gr}}
\def\GD{\check{G}}
\def\TD{\check{T}}
\def\LD{\check{L}}
\def\HD{\check{H}}
\def\hsr{{\alpha_0}}
\begin{document}

\title[Parity sheaves and tilting modules]{
Parity sheaves and tilting modules}

\author{Daniel Juteau} \address{ LMNO, Universit\'e de Caen
  Basse-Normandie, CNRS, BP 5186, 14032 Caen, France}
\email{daniel.juteau@unicaen.fr}
\urladdr{http://www.math.unicaen.fr/~juteau}

\author{Carl Mautner} \address{ Max Planck Institut f\"ur Mathematik,
Vivatsgasse 7, 53111 Bonn, Germany}
\email{cmautner@mpim-bonn.mpg.de}
\urladdr{http://people.mpim-bonn.mpg.de/cmautner}

\author{Geordie Williamson} \address{ Max Planck Institut f\"ur Mathematik,
Vivatsgasse 7, 53111 Bonn, Germany}
\email{geordie@mpim-bonn.mpg.de}
\urladdr{http://people.mpim-bonn.mpg.de/geordie}

\thanks{D.J. was supported by ANR grant ANR-09-JCJC-0102-01 and
  C.M. by an NSF postdoctoral fellowship.}

\maketitle

\selectlanguage{english}
\begin{abstract}
We show that tilting modules and parity
sheaves on the affine Grassmannian are related through the geometric
Satake correspondence, when the characteristic is bigger than an
explicit bound.
\end{abstract}


\selectlanguage{english}


\section{Introduction}


\subsection{Tilting modules for reductive groups} 
\label{subsec-redgps}
Let $\GB$ be a split reductive group over a field $k$ of
characteristic $p$ with a chosen maximal torus and Borel subgroup $\TB
\subset \BB \subset \GB$.  Let $\LL^+$ denote the set of dominant
weights.  To each dominant weight $\lambda \in \Lambda^+$, is
associated an induced representation $\nabla_\lambda:= \mathrm{ind}_\BB^\GB k_\lambda =
H^0(\GB/\BB,\OC(\lambda))$ and its dual $\Delta_\lambda$, the Weyl
module.

The rational representations of $\GB$ form a highest weight category in
which the Weyl modules are the standard objects and the induced modules are
the costandard objects.  A rational representation is said to be
tilting, if it admits two filtrations --- one with successive quotients
isomorphic to Weyl modules and the other with successive quotients
isomorphic to induced modules.

A theorem of Ringel~\cite[Proposition 2]{Ringel} about general highest
weight categories specializes in this setting to the following
result~\cite[Theorem 1.1]{Donkin},

\begin{thm}
\label{thm-RingelTilt}
For each $\ll \in \LL^+$, (up to non-canonical isomorphism) there
exists a unique indecomposable tilting module $T(\ll)$ which has a
unique highest weight $\ll$.  Moreover, $\ll$ has multiplicity one as
a weight of $T(\ll)$.  Every indecomposable tilting module is
isomorphic to $T(\ll)$ for some $\ll \in \LL^+$.
\end{thm}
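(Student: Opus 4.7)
The plan is to give a proof valid in any highest weight category, using as the only input the $\mathrm{Ext}$-vanishing property $\Ext^i(\Delta_\mu, \nabla_\nu) = 0$ for $i \ge 1$ (and, as a consequence, $\Ext^i(\Delta_\mu, M) = 0$ for any $M$ admitting a $\nabla$-filtration).

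First I would construct $T(\lambda)$ by iterated universal extensions. Set $X_0 := \Delta_\lambda$, which has a $\Delta$-filtration in which the highest weight $\lambda$ occurs with multiplicity one. If at some stage $X_i$ does not admit a $\nabla$-filtration, then by the standard criterion there exists $\mu < \lambda$ with $\Ext^1(\Delta_\mu, X_i) \neq 0$; fix such a $\mu$ and form the universal extension
$$ 0 \longto X_i \longto X_{i+1} \longto \Delta_\mu^{\oplus n_\mu} \longto 0, \qquad n_\mu := \dim \Ext^1(\Delta_\mu, X_i). $$
The long exact sequence, together with the vanishing $\Ext^1(\Delta_\mu, \Delta_\mu) = 0$ (coming from $\Delta_\mu$ being a standard object and Ext-vanishing against composition factors $L_\nu$ with $\nu \le \mu$), shows that $\Ext^1(\Delta_\mu, X_{i+1}) = 0$. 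Moreover $X_{i+1}$ inherits a $\Delta$-filtration in which $\lambda$ still has multiplicity one and all added weights lie strictly below $\lambda$. Since the set of weights $\mu < \lambda$ is finite and the relevant $\mathrm{Ext}$-dimensions are bounded, the procedure terminates at a module $T(\lambda)$ admitting both a $\Delta$- and a $\nabla$-filtration, with $\Delta_\lambda$ and $\nabla_\lambda$ each appearing exactly once.

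Next I would prove that $T(\lambda)$ is indecomposable by showing $\End(T(\lambda))$ is local. The $\lambda$-weight space is one-dimensional, so any $\ph \in \End(T(\lambda))$ acts as a scalar $c$ on it; the difference $\ph - c \cdot \id$ vanishes on the unique copy of $\Delta_\lambda$ in the $\Delta$-filtration. Using Ext-vanishing to compare the $\Delta$- and $\nabla$-filtrations, one checks that such an endomorphism is nilpotent, so the non-units form an ideal. For uniqueness, given another indecomposable tilting $T'$ of highest weight $\lambda$, the Ext-vanishing between $\Delta$- and $\nabla$-filtered modules lifts the generator of the highest weight line to maps $f\colon T(\lambda) \to T'$ and $g\colon T' \to T(\lambda)$; the composite $gf$ acts as the identity on the $\lambda$-weight space, hence is a unit of the local ring $\End(T(\lambda))$, so $f$ is an isomorphism.

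Finally, Krull--Schmidt (which applies since all Hom-spaces are finite-dimensional in a reasonable truncation) reduces the classification to indecomposable tiltings: any indecomposable tilting $T$ has a highest weight $\lambda$ (the maximum of the weights in its $\Delta$-filtration) occurring with multiplicity one, and the argument above yields $T \cong T(\lambda)$. The main obstacle is setting up the iterated universal extension cleanly --- ensuring termination and that each step preserves both the $\Delta$-filtration and the fact that $\lambda$ remains the unique maximal weight of multiplicity one --- which rests on the auxiliary vanishing $\Ext^1(\Delta_\mu, \Delta_\mu) = 0$ deduced from the structure of the highest weight category.
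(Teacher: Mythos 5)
The paper itself offers no proof of Theorem~\ref{thm-RingelTilt}: it is quoted from Ringel's Proposition~2 and Donkin's Theorem~1.1. So the only question is whether your argument stands on its own. It is in outline exactly Ringel's construction by iterated universal extensions, and the strategy is sound, but two steps need repair.

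Termination. Killing $\Ext^1(\Delta_\mu,-)$ by a universal extension with cokernel $\Delta_\mu^{\oplus n}$ may create new classes in $\Ext^1(\Delta_{\mu'},-)$ for other weights $\mu'$, so ``the set of weights is finite and the Ext-dimensions are bounded'' does not by itself stop the process; a priori it could cycle. The standard fix is to run once through the finitely many weights $\mu\le\lambda$ in a decreasing linear refinement of the order: since $\Ext^1(\Delta_\mu,\Delta_{\mu'})=0$ unless $\mu<\mu'$, the copies of $\Delta_{\mu'}$ added at later (hence not larger) weights cannot destroy the vanishing already achieved at earlier ones, and after one pass $\Ext^1(\Delta_\mu,X)=0$ for all $\mu$, whence $X\in\FC(\nabla)$ by Theorem~\ref{thm-Ext1}.

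Indecomposability. The claim that an endomorphism vanishing on the $\lambda$-weight line is nilpotent is not available at this stage: if the constructed module had a direct summand all of whose weights are strictly below $\lambda$, the idempotent projection onto it would kill the $\lambda$-weight line without being nilpotent. So as written the argument is circular. Either reproduce Ringel's finer analysis showing the universal-extension tower admits no such summand, or---more simply---\emph{define} $T(\lambda)$ to be the indecomposable direct summand of the constructed module whose $\lambda$-weight space is nonzero (direct summands of tilting objects are tilting, since the characterizations of $\FC(\Delta)$ and $\FC(\nabla)$ in Theorem~\ref{thm-Ext1} are visibly closed under summands). With indecomposability secured, $\End(T(\lambda))$ is local by Fitting's lemma, and your uniqueness argument (maps $f,g$ extending the identity of $\Delta_\lambda$, whose composite is then a unit) and the Krull--Schmidt classification go through.
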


An interesting feature of the class of tilting modules is that it is
closed under both tensor product and restriction to a Levi subgroup:

\begin{thm}
\label{thm-tensor}
If $T$ and $T'$ are tilting modules for $\GB$, then so is the tensor
product $T \otimes T'$.
\end{thm}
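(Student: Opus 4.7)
The plan is to reduce the statement to the theorem of Mathieu (conjectured earlier by Donkin) asserting that a tensor product of two rational $\GB$-modules with good filtrations again admits a good filtration. I would first recall the standard consequence of the Ringel--Donkin result cited in Theorem \ref{thm-RingelTilt} above: a finite-dimensional rational $\GB$-module is tilting if and only if it admits both a good filtration, with successive quotients of the form $\nabla_\mu$, and a Weyl filtration, with successive quotients of the form $\Delta_\mu$. Thus the theorem amounts to producing both kinds of filtration on $T \otimes T'$.

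Next, I would invoke the standard contravariant duality on the category of finite-dimensional rational $\GB$-modules, obtained by composing the $k$-linear dual with pullback along the Chevalley anti-involution. This duality fixes every simple module, hence exchanges $\Delta_\mu$ with $\nabla_\mu$, and it commutes with tensor products. It follows formally that the class of good-filtered modules is closed under tensor product if and only if the class of Weyl-filtered modules is closed under tensor product. Consequently it is enough to establish just one of the two filtration statements for $T \otimes T'$, say the existence of a good filtration.

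The essential input, and the main obstacle, is then Mathieu's theorem on good filtrations. Using Kempf vanishing and the identification $\nabla_\mu = H^0(\GB/\BB, \OC(\mu))$, the good-filtration property of a module $V$ can be detected cohomologically (via the vanishing of $\Ext^i(\Delta_\mu, V)$ for $i \geq 1$ and all dominant $\mu$), and Mathieu's argument then proceeds via Frobenius splittings of Schubert varieties in a product of flag varieties. There is no elementary argument at this step; once it is granted, however, Theorem \ref{thm-tensor} follows at once from the reductions above. An analogous application of the duality then gives the Levi restriction statement, since restriction to a Levi subgroup preserves good and Weyl filtrations by a separate and simpler argument (a highest-weight category version of Kempf vanishing for the parabolic).
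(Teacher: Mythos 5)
Your proposal is correct and is essentially the paper's own treatment: the paper quotes Theorem \ref{thm-tensor} from the literature (Wang, Donkin, Mathieu) in its good-filtration formulation (Theorem \ref{thm-tensorgood}), and its appendix deduces the tilting statement from Mathieu's theorem by exactly your argument, namely that contravariant duality exchanges $\Delta_\mu$ and $\nabla_\mu$ and commutes with tensor product, so a good filtration on $T\otimes T'$ automatically yields a Weyl filtration as well. (The paper additionally observes that for $p > b(\Phi)$ its main result gives an independent geometric proof via parity of convolution products, but that is presented as an application rather than as the proof of the theorem in full generality.)
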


\begin{thm}
\label{thm-res}
Let $\LB$ be a Levi subgroup of $\GB$.  If $T$ is a tilting module for
$\GB$, then the restriction $\Res^\GB_\LB T$ to $\LB$ is a tilting
module for $\LB$.
\end{thm}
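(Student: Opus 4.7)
The plan is to reduce Theorem \ref{thm-res} to two statements about induced and Weyl modules: (a) for every $\mu \in \LL^+$, $\Res^\GB_\LB \nabla_\mu$ has a filtration by induced $\LB$-modules, and (b) for every $\mu \in \LL^+$, $\Res^\GB_\LB \Delta_\mu$ has a filtration by Weyl $\LB$-modules. Indeed, a tilting $\GB$-module $T$ has by definition both a filtration by Weyl modules and a filtration by induced modules, and since $\Res^\GB_\LB$ is exact (it is just the forgetful functor), these filtrations of $T$ restrict to filtrations of $\Res^\GB_\LB T$ whose subquotients are the modules $\Res^\GB_\LB \Delta_\mu$ and $\Res^\GB_\LB \nabla_\mu$. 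Applying (a) and (b) to these subquotients and refining will produce the good and Weyl filtrations required for $\Res^\GB_\LB T$ to be tilting.

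Statement (b) will follow from (a) by duality. One has $\Delta_\mu \cong (\nabla_{-w_0 \mu})^*$, where $w_0$ is the longest element of the Weyl group of $\GB$; the restriction functor commutes with taking duals; and $\Delta^L_\nu \cong (\nabla^L_{-w_{0,L}\nu})^*$ for $\LB$ in the same way, where $w_{0,L}$ denotes the longest element of the Weyl group of $\LB$. Dualising a $\nabla^L$-filtration of $\Res^\GB_\LB \nabla_{-w_0 \mu}$ therefore yields a $\Delta^L$-filtration of $\Res^\GB_\LB \Delta_\mu$.

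For (a), I would let $\PB \supset \BB$ be the standard parabolic with Levi $\LB$ and let $\PB^-$ denote its opposite. The $\PB^-$-orbits on $\GB/\BB$ are indexed by the set $W^L$ of minimal-length representatives of $W_L \backslash W$, and each orbit $\PB^- \cdot w\BB/\BB$ projects to $\LB/\BB^L$ (with $\BB^L := \BB \cap \LB$) as a Zariski-locally trivial affine-space bundle. Choosing an ordering of $W^L$ refining the closure order yields a filtration of $\GB/\BB$ by closed $\PB^-$-stable subvarieties, hence a filtration of $\nabla_\mu = H^0(\GB/\BB, \OC(\mu))$ by $\PB^-$-submodules, which in particular are $\LB$-submodules. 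A Leray-type argument along each affine-bundle projection, combined with Kempf vanishing on $\LB/\BB^L$, should identify each subquotient either with $0$ or with an induced $\LB$-module of the form $\nabla^L_{\nu_w}$ for a suitable weight $\nu_w$ depending on $w \in W^L$.

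The main obstacle will be this last identification of subquotients, and in particular the vanishing of higher direct images along the affine-bundle projections, which is where positive-characteristic subtleties enter. This is essentially the classical theorem of Donkin (extended by Mathieu to all characteristics), and in the write-up I would either invoke those references directly or carry out the Leray/Kempf-vanishing computation sketched above.
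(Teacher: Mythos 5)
Your reduction is sound as far as it goes: a tilting module has both a $\Delta$- and a $\nabla$-filtration, $\Res^\GB_\LB$ is exact, and duality exchanges the two kinds of filtration, so Theorem~\ref{thm-res} does follow from the single statement that $\Res^\GB_\LB \nabla_\mu$ has a good filtration for every $\mu \in \LL^+$. This is essentially one direction of the equivalence the paper records in its appendix between Theorem~\ref{thm-res} and Theorem~\ref{thm-resgood}. But that single statement \emph{is} the classical restriction theorem (Theorem~\ref{thm-resgood} applied to $V = \nabla_\mu$), so your fallback of ``invoking those references directly'' reduces the proof to a relabelling of the theorem; this is exactly what the paper does for arbitrary $p$ by citing \cite{Donkin:book} and \cite{Mathieu}, and it proves nothing new.

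The direct argument you sketch for (a) has a genuine gap, and it sits exactly where you suspect. First, an ordering of the $\PB^-$-orbits does not by itself produce a filtration of $H^0(\GB/\BB,\OC(\mu))$ with the desired subquotients: restriction of sections of a line bundle to a dense open subset of the irreducible variety $\GB/\BB$ is injective, so the naive kernels vanish. One must instead work either with the decreasing filtration by sections vanishing on closed unions of strata (whose subquotients are relative, Demazure-type modules, and whose identification requires normality and higher-cohomology vanishing for unions of Schubert varieties), or with Kempf's Grothendieck--Cousin complex, which yields a finite \emph{right} resolution of $\nabla_\mu$ by local-cohomology modules rather than a filtration --- and a finite right resolution by objects of $\FC(\nabla)$ does not place an object in $\FC(\nabla)$ (compare Proposition~\ref{prop-resol}, which characterizes $\FC(\nabla)$ via \emph{left} resolutions by tiltings). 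Identifying the relevant subquotients as good-filtered $\LB$-modules in positive characteristic is precisely the hard content of Donkin's case analysis, Mathieu's Frobenius splittings, and the Polo--van~der~Kallen theory of excellent filtrations; ``Leray plus Kempf vanishing'' does not deliver it. Note finally that the paper's own new proof of Theorem~\ref{thm-res} (valid for $p > b(\Phi)$) takes a completely different, geometric route: under geometric Satake, $T(\lambda)$ corresponds to the parity sheaf $\EC(\lambda)$ (Theorem~\ref{thm:T=P}), restriction to $\LB$ corresponds to the hyperbolic localization functor $R^{\GD}_{\LD}$, which is t-exact and preserves parity complexes (Theorem~\ref{thm-parityres}), and a perverse parity complex is tilting (Proposition~\ref{prop-paritytilting}).
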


The first of these theorems was originally proven by Wang~\cite{WangFilt}
in type $A$ and in large characteristic for other groups.  Donkin~\cite{Donkin:book}
later proved both theorems in almost full generality (he excluded the
case when $p=2$ and $G$ has a component of type $E_7$ or $E_8$).
The first complete and uniform proof of both theorems is due to
Mathieu~\cite{Mathieu} and uses Frobenius splitting techniques.  Other approaches to
the first theorem appear in~\cite{Littelmann, PoloFilt, Paradowski,
  Kaneda}.\footnote{In the literature, these theorems appear with the
  words `tilting modules' replaced by `modules admitting a good
  filtration' or `modules admitting a Weyl filtration'.  In the
  appendix to this paper, we explain the fact, well-known to experts, that these are equivalent
  formulations.}

\subsection{Parity sheaves for the affine Grassmannian}
\label{subsec-parity}

Let $\GD \supset \TD$ be the connected complex algebraic group and
maximal torus with root datum dual to that of $\GB$.

Let $\KC = \CM((t))$ and $\OC= \CM[[t]]$.
The affine Grassmannian $\Gr$ for $\GD$ is an ind-scheme whose complex
points form the set $\GD(\KC)/\GD(\OC)$.  We consider its
complex points as an ind-$\GD(\OC)$-variety. The $\GD(\OC)$-orbits are
labeled by the set $\Lambda^+$ of dominant  weights of $\GB$ and we
denote the orbit corresponding to a weight $\lambda$ by $\Gr^\lambda$.

The geometric Satake theorem~\cite{MV} shows that the representation
theory of $\GB$ is encoded in a category of sheaves of $k$-vector
spaces on $\Gr$. More precisely, the category of rational
representations of $\GB$ is equivalent to the category $P(\Gr)$: the
category of $\GD(\OC)$-equivariant perverse sheaves on $\Gr$ with
coefficients in $k$.

The category $P(\Gr)$ is the heart of a t-structure on $D(\Gr)$, the bounded
$\GD(\OC)$-equivariant constructible derived category of sheaves of $k$-vector
spaces on $\Gr$.  There is a natural convolution product $\star:
D(\Gr) \times D(\Gr) \to D(\Gr)$ which is t-exact and produces a
tensor structure on $P(\Gr)$, corresponding under the equivalence
to the tensor product of rational representations of $\GB$.

Similarly, for any Levi subgroup $\LB \subset \GB$, the restriction
functor from $\GB$ to $\LB$ corresponds to a geometrically-defined
t-exact functor $R^{\GD}_{\LD}: D(\Gr) \to D(\Gr_{\LD})$, where
$\Gr_{\LD}$ is the affine Grassmannian for $\LD \subset \GD$, the Levi
subgroup containing $\TD$ whose roots are dual to those of $\LB$ (see
Section~\ref{subsec-hyperlocGr} for more details).

\medskip

Recall the notion of a parity complex \cite[Section
2.2]{JMW2}.\footnote{Unless stated otherwise, in this paper parity
  complexes are defined with respect to the constant pariversity
  $\natural$.}
The affine Grassmannian is a Kac-Moody flag variety
and hence the results from ~\cite[Section 4.1]{JMW2} (see in
particular Example 4.2 of loc. cit.) can be used to study
$D(\Gr)$. Parity complexes on the affine Grassmannian behave very much
like the tilting modules for $\GB$.  In particular, we have the
following theorems which mirror the ones for tilting modules.

The starting point is a result~\cite[Theorem 4.6]{JMW2} that is very
similar to Theorem~\ref{thm-RingelTilt}:

\begin{thm}
Assume that the characteristic of $k$ is not a torsion prime for
$\GD$.\footnote{This restriction can be removed by working in the
  non-equivariant setting.}

For each $\ll \in \LL^+$, (up to non-canonical isomorphism) there
exists a unique indecomposable parity complex $\EC(\ll)$ such that
$\supp(\EC(\ll)) = \ov{\Gr^\ll}$ and $\EC(\ll)|_{\Gr^\ll} =
\uk_{\Gr^\ll}[\dim \Gr^\ll]$.  Every indecomposable parity complex is
isomorphic to $\EC(\ll)$ for some $\ll\in \LL^+$.
\end{thm}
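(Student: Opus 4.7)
The plan is to derive this statement as a specialization of the general classification of parity sheaves for a stratified variety from \cite[Theorem 2.12]{JMW2}, applied to the affine Grassmannian with its $\check G(\OC)$-stratification. First I would recall that $\Gr$ is ind-projective and that each $\check G(\OC)$-orbit $\Gr^\lambda$ is isomorphic to the total space of a $\check G$-equivariant vector bundle over a partial flag variety $\check G/\check P_\lambda$, where $\check P_\lambda$ is the parabolic stabilizing the line through the base point $t^\lambda$. In particular, each $\Gr^\lambda$ has a cell decomposition with only even-dimensional cells, so the parity-vanishing hypothesis of the general theory is satisfied, and the Kac-Moody set-up of \cite[Section 4.1]{JMW2} applies verbatim.

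Next I would apply the general existence-and-uniqueness statement, which produces, for each pair $(\lambda,L)$ consisting of a dominant coweight $\lambda$ and an indecomposable $\check G(\OC)$-equivariant local system $L$ of $k$-vector spaces on $\Gr^\lambda$, a unique indecomposable parity complex $\EC(\lambda,L)$ with support $\ov{\Gr^\lambda}$ and restriction $L[\dim\Gr^\lambda]$ to the open stratum, and such that every indecomposable parity complex arises in this way. The proof of this general statement proceeds by the usual inductive construction: one extends $L[\dim\Gr^\lambda]$ from $\Gr^\lambda$ to larger open substrata one stratum at a time, using the parity-vanishing of stalks together with \cite[Proposition 2.7]{JMW2} to kill the obstructions, and uniqueness follows from a standard $\Hom$-calculation showing that any two choices are isomorphic as parity complexes.

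To reach the statement as written, it then suffices to show that, under the hypothesis that $\Char k$ is not a torsion prime for $\check G$, every indecomposable $\check G(\OC)$-equivariant $k$-local system on $\Gr^\lambda$ is isomorphic to the constant sheaf $\uk_{\Gr^\lambda}$. Equivariant local systems on $\Gr^\lambda$ correspond to representations of the component group of the stabilizer of $t^\lambda$ in $\check G(\OC)$. This stabilizer has pro-unipotent radical and reductive quotient equal to the Levi $\check L_\lambda$ of $\check P_\lambda$, which is connected; thus the component group is trivial, so every equivariant local system is in fact constant, and this reduces the classification to a single parity sheaf $\EC(\lambda):=\EC(\lambda,\uk_{\Gr^\lambda})$ per dominant coweight.

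The main technical point where care is needed is the verification in the previous paragraph: one must check that the hypothesis on $\Char k$ really does imply the vanishing of all non-trivial equivariant local systems on every stratum, which is precisely the role played by the torsion-prime condition. The footnote in the statement is consistent with this analysis, since in the non-equivariant setting one replaces equivariant local systems with ordinary local systems on $\Gr^\lambda$, and these are automatically trivial because $\Gr^\lambda$ is simply connected (being a vector bundle over a simply connected partial flag variety), so no hypothesis on $\Char k$ is needed.
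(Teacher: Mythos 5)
This theorem is not proved in the paper at all: it is quoted from \cite[Theorem 4.6]{JMW2}, and the only thing the present paper recalls about its proof (in the proof of Theorem~\ref{thm-parityres}) is that existence comes from generalized Bott--Samelson resolutions. Your overall strategy --- specialize the general classification of \cite{JMW2} to the $\GD(\OC)$-orbit stratification and then reduce to constant local systems --- is the right one, but two of your steps do not match what actually happens, and one of them is a genuine error. First, you misidentify the role of the torsion-prime hypothesis. The reduction to constant equivariant local systems is automatic in \emph{every} characteristic: as you yourself observe, the stabilizer of $t^\lambda$ in $\GD(\OC)$ is connected, so its component group is trivial and there are no nonconstant equivariant local systems to rule out. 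If the hypothesis on $\Char k$ were only needed for that step, it would be vacuous. Its actual role is to guarantee the standing parity-vanishing assumption of \cite[(2.1)]{JMW2} in the \emph{equivariant} setting: one needs $H^*_{\GD(\OC)}(\Gr^\lambda;k)\cong H^*(BL_\lambda;k)$ (with $L_\lambda$ the Levi of $\check P_\lambda$) to vanish in odd degrees, and $H^*(BL;k)$ acquires odd-degree classes exactly when $\Char k$ is a torsion prime. Your appeal to the even cell decomposition of $\Gr^\lambda$ only controls ordinary cohomology, which is why the footnote says the hypothesis can be dropped in the non-equivariant setting --- the logic of your first and third paragraphs is inverted.

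Second, the existence argument. The general theorem \cite[Theorem 2.12]{JMW2} gives only \emph{uniqueness}; existence of parity extensions can fail even when (2.1) holds, so ``extend one stratum at a time, using parity-vanishing of stalks to kill the obstructions'' is not a proof --- you cannot invoke parity of the stalks of an object you have not yet constructed, and there is no general obstruction-vanishing statement of this kind in \cite{JMW2}. The actual existence proof for Kac--Moody (partial) flag varieties takes the pushforward $f_*\uk_{\Bs}$ along a generalized Bott--Samelson resolution of $\ov{\Gr^\ll}$, shows it is even because the fibres of $f$ admit affine pavings (plus the evenness of $H^*(BL;k)$ in the equivariant setting), and extracts $\EC(\ll)$ as the indecomposable summand supported on all of $\ov{\Gr^\ll}$. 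You should replace your second paragraph's existence mechanism with this one.
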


The indecomposable parity complexes $\EC(\ll)$ are known as parity sheaves.

As a special case of \cite[Theorem 4.8]{JMW2}, we obtain an analogue of Theorem~\ref{thm-tensor}:

\begin{thm}
\label{thm-parityconv}
If $\FC \in D(\Gr)$ and $\GC \in D(\Gr)$ are parity complexes, then so
is the convolution product $\FC \star \GC \in D(\Gr)$.
\end{thm}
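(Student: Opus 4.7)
The plan is to verify the hypotheses of \cite[Theorem 4.8]{JMW2}, which gives sufficient conditions for a proper pushforward to preserve parity. Recall that convolution is defined as $\FC \star \GC := m_*(\FC \wt{\boxtimes} \GC)$, where $m : \Gras \wt{\times} \Gras \to \Gras$ is the multiplication map on the convolution affine Grassmannian (proper on supports, so $m_* = m_!$) and $\FC \wt{\boxtimes} \GC$ denotes the twisted external product. So it suffices to check (a) that $\FC \wt{\boxtimes} \GC$ is a parity complex on $\Gras \wt{\times} \Gras$, and (b) that $m_*$ sends parity complexes to parity complexes.

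For (a), I would stratify $\Gras \wt{\times} \Gras$ by twisted products of orbits $\Gras^\ll \wt{\times} \Gras^\mu$. The convolution space is smooth-locally isomorphic to the ordinary product $\Gras \times \Gras$, so $\wt{\boxtimes}$ is smooth-locally identified with the ordinary external product $\boxtimes$. Since smooth pullback preserves parity (up to shift), and the external product of two parity complexes on a product stratification is manifestly parity, $\FC \wt{\boxtimes} \GC$ is parity.

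For (b), I would apply the even-pushforward criterion from \cite[Section 2]{JMW2}: if $m$ is a proper stratified map such that for each stratum $\Gras^\ll \wt{\times} \Gras^\mu$ of the source, each stratum $\Gras^\nu$ of the target and each point $x \in \Gras^\nu$, the cohomology of the intersection $m^{-1}(x) \cap (\Gras^\ll \wt{\times} \Gras^\mu)$ is concentrated in a single parity (after the appropriate shift), then $m_*$ preserves parity.

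The main obstacle — and the essential geometric input — is verifying this fiberwise parity condition. The fibers in question can be described (up to affine-space bundles) as intersections inside the convolution space analogous to the classical Mirkovi\'c--Vilonen cycles; the key fact is that such varieties admit affine pavings, which then forces their cohomology to vanish in odd degree. Granted this cellular structure, \cite[Theorem 4.8]{JMW2} applies and gives the theorem.
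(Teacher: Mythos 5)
The skeleton of your argument --- (a) the twisted external product of parity complexes is parity, (b) $m_*$ preserves parity via the even-morphism criterion --- is the right framework, and (a) is unproblematic. The problem is that the whole content of the theorem sits in your last paragraph, and the justification offered there does not stand up. For the $\GD(\OC)$-orbit stratification the relevant fibers are
$m^{-1}(x)\cap(\Gr^\lambda\wt\times\Gr^\mu)\cong \Gr^\lambda\cap x\cdot\Gr^{\mu^*}$ for $x\in\Gr^\nu$ (with $\mu^*=-w_0\mu$), i.e.\ the fibers of the convolution morphism $\ov{\Gr^\lambda}\wt\times\ov{\Gr^\mu}\to\Gr$ over the open strata. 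It is \emph{not} a known ``key fact'' that these admit affine pavings: pavings of convolution-morphism fibers on the affine Grassmannian are a genuinely difficult problem (Haines proved equidimensionality in general and pavings only in special cases, e.g.\ when the coweights are minuscule), and the analogy with Mirkovi\'c--Vilonen cycles does not help --- MV cycles themselves are singular and not known to be paved by affines. Even the weaker statement you actually need, that $H^*_c(\Gr^\lambda\cap x\cdot\Gr^{\mu^*})$ sits in a single parity, is essentially an output of the parity machinery rather than an available input. A secondary slip: \cite[Theorem 4.8]{JMW2} is itself the statement that convolution preserves parity on Kac--Moody flag varieties (it is what the paper cites, without reproving it); the general pushforward criterion you invoke is the even-morphism proposition of \cite[\S 2.3]{JMW2}.

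The actual proof of \cite[Theorem 4.8]{JMW2} sidesteps the convolution fibers entirely by refining to the Iwahori (Bruhat) stratification. Every indecomposable parity complex is, up to shift, a direct summand of $\pi_*\uk$ for a generalized Bott--Samelson resolution; the convolution of two such pushforwards is again the pushforward from a concatenated Bott--Samelson variety, and these maps \emph{are} even because their fibers over Bruhat cells are honestly paved by affine spaces (an elementary induction along the iterated $\PC_s/\BC\cong\PM^1$-bundle structure). Equivalently, convolution with the elementary parity sheaf on $\PC_s/\BC$ is the push--pull $\pi_s^*\pi_{s*}[1]$ along a $\PM^1$-bundle, which visibly preserves parity, and one then passes back to the coarser $\GD(\OC)$-orbit stratification. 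That reduction to Bott--Samelson objects is the step missing from your outline; without it, step (b) rests on an unproved and, in this generality, unknown geometric assertion.
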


The first part of this paper establishes an analogue of Theorem~\ref{thm-res}.
In Section \ref{sec-hyperloc}, we prove the following result,

\begin{thm}
\label{thm-parityres}
Let $\LD$ be the Langlands dual of $\LB$ a Levi subgroup of $\GB$. If
$\FC \in D(\Gr)$ is a parity complex, then $R^{\GD}_{\LD}(\FC)$ is a
parity complex on the affine Grassmannian for $\LD$.
\end{thm}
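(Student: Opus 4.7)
The plan is to use the Mirkovi\'{c}--Vilonen construction of the restriction functor together with Braden's hyperbolic localization theorem. Fix a parabolic $\check{P} \subset \GD$ with Levi quotient $\LD$, and consider the correspondence
$$\Gr \xleftarrow{\; p \;} \Gr_{\check{P}} \xrightarrow{\; q \;} \Gr_{\LD}$$
induced, respectively, by the inclusion $\check{P} \hookrightarrow \GD$ and the projection $\check{P} \twoheadrightarrow \LD$. Up to cohomological shifts which are locally constant on the connected components of $\Gr_{\check{P}}$, the functor $R^{\GD}_{\LD}$ is given by $q_{!} \, p^{*}$, and by Braden's theorem this is canonically isomorphic, up to the same shifts, to $q_{*} \, p^{!}$. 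It therefore suffices to show that this composite sends parity complexes to parity complexes.

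I would split the verification into two steps. First, I would show that $p^{*}\FC$ (equivalently $p^{!}\FC$) is a parity complex on $\Gr_{\check{P}}$ with respect to the stratification by $\check{P}(\OC)$-orbits, for any parity $\FC$ on $\Gr$. The key geometric input is the Mirkovi\'{c}--Vilonen dimension estimate for the intersections $\Gr^{\lambda} \cap \Gr_{\check{P}}^{\nu}$: these ``MV cycles'' are equidimensional of the prescribed half-dimension, and this is precisely the numerical condition that transports parity of the stalks and costalks of $\FC$ on $\Gr^{\lambda}$ to parity of the stalks and costalks of $p^{*}\FC$ (and $p^{!}\FC$) on each $\check{P}(\OC)$-orbit. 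Crucially, the Braden identification $q_{!} p^{*} \simeq q_{*} p^{!}$ allows one to control $*$- and $!$-restrictions by a single dimension count.

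Second, I would show that $q_{!}$ sends any parity complex on $\Gr_{\check{P}}$ to a parity complex on $\Gr_{\LD}$. This rests on the fact that each $\check{P}(\OC)$-orbit in $\Gr_{\check{P}}$ projects onto a single $\LD(\OC)$-orbit in $\Gr_{\LD}$, and the projection is a (pro-)affine bundle coming from the unipotent radical of $\check{P}(\OC)$. Since pushforward with compact support along a trivial affine space bundle of rank $r$ is a shift by $-2r$, parity is preserved by a uniform amount on each component of $\Gr_{\check{P}}$. Combining the two steps gives the theorem.

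The main obstacle is the first step. One cannot appeal to a generic ``$f^{*}$ preserves parity when $f$ is smooth'' lemma, since $p$ is not smooth: its restrictions to strata are locally closed embeddings whose codimensions vary across components. It is precisely the combination of Braden's theorem with the MV dimension estimates that promotes the stalk-parity hypothesis on $\Gr$ to simultaneous $*$- and $!$-parity for $p^{*}\FC$ on $\Gr_{\check{P}}$. A secondary technical point, concerning the second step, is that $\Gr_{\check{P}}$ is only an ind-scheme; this is handled by choosing compatible finite-type approximations on which $q$ becomes a genuine finite-dimensional affine bundle.
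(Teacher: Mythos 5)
The paper's proof does not attempt to show that $p^{*}\FC$ is parity on $\Gr_{\check{P}}$; instead it reduces (via \cite[Theorem 4.6]{JMW2}) to the case $\FC = f_{*}\uk_{\Bs}$ for a Bott--Samelson type resolution $f : \Bs \to \overline{\Gr^{\lambda}}$, base-changes along the attracting set, and then computes $H^{*}_{c}$ of the fibre over each $\TD$-fixed point of $\Gr_{\LD}$ by choosing a refined cocharacter $\zeta$ and using the filtrable Bia{\l}ynicki-Birula decomposition of $\Bs$. Evenness of stalks comes from the fundamental classes of BB cells; evenness of costalks follows by Verdier duality via Braden's isomorphism; $\LD(\OC)$-equivariance then spreads the fixed-point computation over each orbit. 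So while you correctly identify the functor as $q_{!}p^{*}\simeq q_{*}p^{!}$ (up to shift) and correctly flag that one cannot use smoothness of $p$, your route is genuinely different from the paper's, and your first step contains a gap.

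The gap is in the claim that $p^{*}\FC$ is a parity complex on $\Gr_{\check{P}}$. The $*$-restrictions of $p^{*}\FC$ to $\check{P}(\OC)$-orbits are under control, since $\check{P}(\OC)\subset\GD(\OC)$ forces each such orbit into a single $\GD(\OC)$-orbit. But parity also requires control of the $!$-restrictions $j_{O}^{!}\,p^{*}\FC$, and these are \emph{not} computed from the parity data of $\FC$ on $\Gr$: $p$ is an embedding of a locally closed (attracting) subvariety which is not a union of $\GD(\OC)$-strata, so $j_{O}^{!}$ and $p^{*}$ do not interact in any elementary way. The Mirkovi\'c--Vilonen dimension estimate bounds the dimension of $\Gr^{\lambda}\cap\Gr_{\check{P}}^{\nu}$ — which, combined with constructibility, bounds the degrees of $H^{*}_{c}$ — but it says nothing about parity of those degrees, nor about the local cohomology of $\Gr_{\check{P}}$ along an orbit. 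Braden's isomorphism $q_{!}p^{*}\simeq q_{*}p^{!}$ likewise only gives information after pushing forward to $\Gr_{\LD}$, not parity on $\Gr_{\check{P}}$ itself. What is actually needed is a reason for the fibres of $q\circ(p^{-1})$ to have even compactly supported cohomology, and the paper supplies exactly this by passing to a resolution and producing a BB decomposition of each fibre. Without some substitute for that step (e.g.\ reduction to resolutions, or an explicit even cell decomposition of the MV-type intersections), your argument does not close.
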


The idea of the proof is to replace the purity argument
of~\cite[Theorem 2]{Br} by a parity argument.

\subsection{Tilting equals parity}
\label{subsec-geomsat}

In Section~\ref{sec-tilting}, which can be read independently of
Section~\ref{sec-hyperloc}, we prove our main result, which explains the similarities between
the theorems stated above.  Our result shows that, for most characteristics, the
Theorems~\ref{thm-tensor} and~\ref{thm-res} about tilting modules are
equivalent to the Theorems~\ref{thm-parityconv} and \ref{thm-parityres} about parity sheaves.

Recall~\cite[Prop. 13.1]{MV} that, for $\lambda \in \Lambda^+$ a dominant weight,
the Weyl module $\Delta_\lambda$ (resp. $\nabla_\lambda$) goes under the
geometric Satake equivalence to the standard sheaf 
${}^p\JC_{!}(\lambda):={}^p j_{\lambda !} \uk_{\lambda}[d_\lambda]$
(resp. costandard sheaf ${}^p\JC_{*}(\lambda):={}^p j_{\lambda *} \uk_{\lambda}[d_\lambda]$)
where $j_\lambda:\Gr^\lambda \to \Gr$ denotes the inclusion,
 $\uk_\lambda$ the constant sheaf on $\Gr^\lambda$ and $d_\lambda$
 the dimension of $\Gr^\lambda$.

We say $\FC \in P(\Gr)$ is a tilting sheaf if it corresponds to a tilting
module for $\GB$.  This is equivalent to admitting two filtrations ---
one with standard successive quotients and the other with costandard
successive quotients.\footnote{Warning: this definition of tilting sheaf is more 
  general than that of~\cite{TiltExer}, which does not apply to
  this setting.}
We denote by $\TC(\ll)$ the tilting sheaf corresponding to the
indecomposable tilting module $T(\lambda)$.

Our main theorem is the following geometric characterization of the tilting sheaves on the affine Grassmannian.
%
%
We will need to assume that the characteristic $p$ is bigger than some bound depending only on the root system $\Phi$ of $\GB$.

\begin{defi}
If the root system $\Phi$ is irreducible, let $b(\Phi)$ be given by the following table:
\begin{table}[h]
\[
\begin{array}{|c||c|c|c|c|c|c|}
\hline
\text{Type of } \Phi & A_n & B_n, D_n & C_n & G_2, F_4, E_6 & E_7 & E_8\\
\hline
b(\Phi) & 1 &  2 &  n &  3 & 19 & 31\\
\hline
\end{array}
\]
\caption{Table of bounds}\label{table:primes}
\end{table}

In the general case, let $\Phi = \sqcup_{i=1}^{s} \Phi_{i}$ be the
decomposition into irreducible components. Then we set
$b(\Phi) := \max_{1\leq i\leq s} b(\Phi_{i})$.
\end{defi}

\begin{thm}
\label{thm:T=P}
If $p > b(\Phi)$, then the group $\GB$ satisfies
\begin{equation}\tag{$*$}
\forall \lambda \in \Lambda^+, \quad
\EC(\lambda) = \TC(\lambda).
\end{equation}
\end{thm}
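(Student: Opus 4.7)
The plan is to show, under the hypothesis $p > b(\Phi)$, that every tilting sheaf $\TC(\lambda)$ is in fact a parity complex. Once this is established, the indecomposability of $\TC(\lambda)$, together with its support $\overline{\Gr^\lambda}$ and its restriction $\uk_{\Gr^\lambda}[d_\lambda]$ to the open stratum (both of which follow from the standard filtration whose top piece is $\JC_!(\lambda) = j_{\lambda !}\uk_{\Gr^\lambda}[d_\lambda]$), will force $\TC(\lambda) = \EC(\lambda)$ by the uniqueness statement in the classification of parity sheaves recalled above.

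I would argue by induction on $\lambda$ in the dominance order, with base cases the fundamental dominant coweights of each simple factor of $\GD$. For a minuscule fundamental coweight $\omega$ the orbit closure $\overline{\Gr^\omega} = \Gr^\omega$ is smooth and closed, so $\TC(\omega)$, $\EC(\omega)$ and $\uk_{\overline{\Gr^\omega}}[d_\omega]$ all coincide with no assumption on $p$. For the remaining non-minuscule fundamental coweights, the orbit closures possess explicit singularity structures (starting with the minimal nilpotent orbit closure as transverse slice for the quasi-minuscule weight, and continuing up through a short, type-dependent list of higher fundamental coweights); one then verifies case-by-case, by a direct computation of $*$- and $!$-stalks along the singular strata, that for $p > b(\Phi)$ the tilting sheaf $\TC(\omega)$ is parity. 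The entries of Table~\ref{table:primes} are precisely the bounds under which the relevant mod-$p$ cohomology of the transverse slices is concentrated in even degrees. I expect this case-by-case stalk computation, and the bookkeeping of the resulting bounds, to be the main technical obstacle.

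For the inductive step, given a non-fundamental $\lambda$, write $\lambda = \mu + \omega$ with $\omega$ a fundamental dominant coweight and $\mu \in \LL^+$ with $\mu < \lambda$. By the induction hypothesis $\TC(\mu) = \EC(\mu)$ is parity, and by the base cases $\TC(\omega) = \EC(\omega)$ is parity, so Theorem~\ref{thm-parityconv} ensures that the convolution $\EC(\mu) \star \EC(\omega)$ is parity. Under the geometric Satake equivalence this convolution corresponds to $T(\mu) \otimes T(\omega)$, which is tilting by Theorem~\ref{thm-tensor} and contains $T(\lambda)$ as a summand of multiplicity one (since $\mu + \omega = \lambda$ is its unique highest weight, appearing with multiplicity one). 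Hence $\TC(\lambda)$ is a direct summand of a parity complex, and is therefore itself parity, completing the induction and establishing the equality $\EC(\lambda) = \TC(\lambda)$ for all $\lambda \in \LL^+$.
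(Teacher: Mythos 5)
Your overall architecture — establish the result for fundamental weights, then convolve to get all $\lambda$ — is the same as the paper's, and the inductive step is sound: once $\TC(\mu)$ and $\TC(\omega)$ are known to be parity, convolution, Theorem~\ref{thm-parityconv}, Lemma~\ref{lem-obser}, and the multiplicity-one observation on the highest weight do finish the argument exactly as you describe (modulo the unstated but easy reduction to simple, simply-connected $\GB$). The real problem is your base case.

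You propose to establish $\TC(\varpi_i) = \EC(\varpi_i)$ for all fundamental weights ``by a direct computation of $*$- and $!$-stalks along the singular strata,'' asserting that the entries of Table~\ref{table:primes} are ``precisely the bounds under which the relevant mod-$p$ cohomology of the transverse slices is concentrated in even degrees.'' This is not how the paper obtains those bounds, and it is not clear how one would obtain them this way: for general fundamental $\varpi_i$ in the exceptional types the singularities of $\overline{\Gr^{\varpi_i}}$ do not have tractable transverse-slice descriptions, and no such case-by-case stalk computation exists. The only case in which the paper argues by analyzing a transverse slice is the quasi-minuscule weight $\hsr$ (Lemma~\ref{lem-MinuHSR}), where the slice is a minimal nilpotent orbit closure and \cite[\S 4.3]{JMW2} applies. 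For all other fundamental weights, the paper's Proposition~\ref{prop-fund} instead bootstraps from the minuscule and quasi-minuscule cases: one writes the characteristic-zero Weyl module $V(\varpi_i)$ as a summand of a tensor product of minuscule and quasi-minuscule $V(\mu)$'s, notes that the corresponding convolution of parity sheaves is parity (hence tilting by Lemma~\ref{lem-obser}), and then uses the fact — a representation-theoretic input, not a topological one — that for $p > b(\Phi)$ all Weyl modules appearing in these decompositions remain simple modulo $p$, so the characteristic-$p$ decomposition matches the characteristic-zero one and $T(\varpi_i)$, hence $\TC(\varpi_i)$, appears as a summand. That simplicity criterion is where the bounds in Table~\ref{table:primes} actually come from (e.g.\ $p > 19$ in $E_7$ because $V(\varpi_1 + \varpi_7)$ becomes reducible at $p = 19$). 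So the key missing idea in your proposal is precisely this tensor-product bootstrap replacing the intractable direct geometry; without it, the base case is an unjustified assertion.
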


In particular, for $p$ as in the Theorem, every $\EC(\lambda)$ is
perverse. Note that we know examples where $\EC(\lambda)$ fails to be
perverse, for bad primes (see Lemma \ref{lem-MinuHSR}). However,
Lemma \ref{lem-MinuHSR}(4) suggests that the following may always be
true:

\begin{conj}
In arbitrary characteristic,
for every dominant weight $\lambda \in \Lambda^+$,
the perverse sheaf $\p\HC^0 \EC(\lambda)$ is tilting.
\end{conj}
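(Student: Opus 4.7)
The plan is to establish the conjecture by a Verdier self-duality reduction followed by an inductive parity-controlled analysis of perverse truncations. As a first step, Verdier duality preserves parity complexes; together with the normalisation $j_\lambda^* \EC(\lambda) = \uk_{\Gr^\lambda}[d_\lambda]$ on the open stratum, this forces $\EC(\lambda)$ to be Verdier self-dual. Hence $\p\HC^{-i}(\EC(\lambda)) \simeq \DM\,\p\HC^i(\EC(\lambda))$, and in particular $P := \p\HC^0(\EC(\lambda))$ is self-dual in the perverse heart. Since Verdier duality on $P(\Gr)$ interchanges standard and costandard filtrations, it is enough to show that $P$ admits a filtration by standards $\p\JC_!(\mu)$.

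Second, I would exploit parity at the level of stalks. The defining property of $\EC(\lambda)$ as a $*$- and $!$-parity complex, combined with the fact that each $\GD(\OC)$-orbit $\Gr^\mu$ has only even cohomology and that the pariversity is the constant $\natural$, implies that $j_\mu^* \EC(\lambda)$ is a direct sum $\bigoplus_i \uk_{\Gr^\mu}[n_i]$ with all $n_i$ of one fixed parity (that of $d_\mu$), and likewise for $j_\mu^!\EC(\lambda)$. The goal of the core argument is to show that the perverse truncation $P$ inherits this parity structure in a form that forces its stalks to match a standard filtration. I would prove this by induction on the closure order on $\Lambda^+$: assuming the conjecture known for all $\EC(\mu)$ with $\mu<\lambda$, consider the recollement attached to $j_\lambda: \Gr^\lambda \hookrightarrow \ov{\Gr^\lambda}$ and its closed complement $i: \ov{\Gr^\lambda}\setminus\Gr^\lambda \hookrightarrow \ov{\Gr^\lambda}$. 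Since $j_\lambda^*\EC(\lambda)=\uk[d_\lambda]$ is already perverse, the open-stratum piece of $P$ is the standard $\p\JC_!(\lambda)$; the real work is to identify the closed part with a tilting perverse sheaf assembled from the perverse cohomologies of the parity complex $i^*\EC(\lambda)$, to which the inductive hypothesis applies.

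The hard part will be controlling the non-commutation of $\p\HC^0$ with $j_\mu^*$ for non-open strata: one cannot simply read $j_\mu^* P$ off from the parity decomposition of $j_\mu^*\EC(\lambda)$. The plan is to prove a parity-degeneration statement to the effect that on each stratum $\Gr^\mu$, the restrictions $j_\mu^*\,\p\HC^q\EC(\lambda)$ for different $q$ live in disjoint cohomological parities, so that the parity decomposition of $j_\mu^*\EC(\lambda)$ must split cleanly across perverse cohomology degrees, producing in degree zero the expected shifted constant sheaf. This is closely analogous to the degeneration argument at the heart of Theorem~\ref{thm:T=P}, but must be carried out without the large-characteristic hypothesis. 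The evidence supplied by Lemma~\ref{lem-MinuHSR}(4), together with the fact that every $\EC(\lambda)$ arises as a direct summand of a convolution $\EC(\mu_1)\star\cdots\star\EC(\mu_r)$ of parity sheaves on small orbits where the conjecture can be verified directly, makes it plausible that this degeneration persists in arbitrary characteristic.
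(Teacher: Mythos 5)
There is a genuine gap: the statement you are addressing is stated in the paper as an open \emph{conjecture}, with no proof offered beyond the special case $\lambda=\hsr$ treated in Lemma~\ref{lem-MinuHSR}(4), and your proposal does not close it. Your opening reduction is sound: $\EC(\lambda)$ is indeed Verdier self-dual (duality sends an indecomposable parity complex with the normalisation $\uk_{\Gr^\lambda}[d_\lambda]$ on the open stratum to one with the same normalisation, so uniqueness applies), hence $\p\HC^0\EC(\lambda)$ is self-dual and it suffices to produce a standard filtration. But the ``parity-degeneration statement'' you then defer to --- that the parity decomposition of $j_\mu^*\EC(\lambda)$ splits cleanly across perverse cohomology degrees, yielding $j_\mu^*\,\p\HC^0\EC(\lambda)$ in the single degree demanded by a $\Delta$-flag --- is essentially a restatement of the conjecture itself, and you give no mechanism for proving it. The known obstruction is real: $\p\HC^0$ does not commute with $j_\mu^*$, and for bad primes $\EC(\lambda)$ genuinely fails to be perverse (Lemma~\ref{lem-MinuHSR}), so any argument must survive the presence of nonzero $\p\HC^{\pm i}$ interfering on lower strata.

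The supporting evidence you cite is also unreliable. It is \emph{not} true in arbitrary characteristic that every $\EC(\lambda)$ is a direct summand of a convolution $\EC(\mu_1)\star\cdots\star\EC(\mu_r)$ with all $\mu_i$ minuscule or quasi-minuscule: the failure of exactly this generation statement for small $p$ is the reason for the bound $b(\Phi)$ in Theorem~\ref{thm:T=P}, and Donkin's argument in type $C_n$ (Section on the bound in type $C_n$) shows it cannot hold for $2<p\le n$ there. Even where such a presentation exists, knowing the conjecture for each factor does not transfer to the summand: the convolution $\EC(\mu_1)\star\cdots\star\EC(\mu_r)$ is parity but generally not perverse, and $\p\HC^0$ of a convolution is not the convolution of the $\p\HC^0$'s, so no inductive leverage is gained. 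To make progress one would need an analogue, valid on all strata and in all characteristics, of the exact sequence \eqref{eqn-ses} used for $\hsr$ (coming from \cite[Lemma 4.21(1)]{JMW2} applied to an explicit small resolution-like map); producing such sequences for arbitrary $\lambda$ is precisely what is missing.
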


On the other hand, note that by Proposition \ref{prop-paritytilting},
the property $(*)$ is actually equivalent to all $\EC(\lambda)$ being
perverse.

\subsection{Applications}  

One motivation for this work was a conjecture of Mirkovi\'c
and Vilonen~\cite[Conjecture 13.3]{MV}. They conjectured that the
stalks of standard sheaves with $\ZM$-coefficients on the affine
Grassmannian are torsion free. This conjecture is equivalent
to the standard sheaves being $*$-parity for all fields (i.e. their non-zero stalks should be concentrated in one parity). Actually the
minimal nilpotent orbit singularities provide counterexamples to this
conjecture, in all types but in type $A_n$: see \cite{Ju-AffGr}, where
the conjecture is modified to exclude bad primes. We get the following
reformulation: 

\begin{conj}
\label{conj-mv}
 If $p$ is a good prime for $\GB$, then the standard sheaves
  with coefficients in a field of characteristic $p$ are $*$-parity.
\end{conj}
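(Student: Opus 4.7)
The plan is to split by the size of $p$. For $p > b(\Phi)$, Theorem~\ref{thm:T=P} applies directly; for the remaining good primes --- which survive only in types $C_n$ (for $3 \le p \le n$), $E_7$ (for $p \in \{5,7,11,13,17\}$), and $E_8$ (for $p \in \{7,11,13,17,19,23,29\}$) --- a separate argument is needed.

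In the first regime, I would argue by induction on the dominance order on $\Lambda^+$. Fix $\lambda$ and assume $*$-parity is known for all ${}^p\JC_!(\mu)$ with $\mu < \lambda$. By Theorem~\ref{thm:T=P}, $\EC(\lambda) = \TC(\lambda)$, so this object is simultaneously $*$-parity and tilting. Its standard filtration provides a short exact sequence of perverse sheaves
\begin{equation*}
0 \to F \to \TC(\lambda) \to {}^p\JC_!(\lambda) \to 0,
\end{equation*}
with $F$ an iterated extension of the ${}^p\JC_!(\mu)$, $\mu < \lambda$, and hence $*$-parity by the inductive hypothesis. To deduce $*$-parity of ${}^p\JC_!(\lambda)$ from the corresponding distinguished triangle of stalks, one must use extra input, since parity is not preserved freely across distinguished triangles. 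Here the extra input is that ${}^p\JC_!(\lambda)$ is the perverse $!$-extension, giving the sharper vanishing $i_\nu^*\, {}^p\JC_!(\lambda) \in {}^p D^{\le -1}$ on each boundary stratum $\Gr^\nu$. Together with the constraint $\dim \Gr^\mu \equiv \dim \Gr^\lambda \pmod{2}$ for all $\mu \le \lambda$ on the affine Grassmannian, this should force the long exact sequence of stalk cohomology to concentrate in a single parity.

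For the second regime, Theorem~\ref{thm:T=P} is silent. Here I would appeal to Theorem~\ref{thm-parityres}: the hyperbolic localization $R^\GD_\LD$ preserves parity for every Levi $\LB \subset \GB$ and is known to send perverse standards on $\Gr_\GD$ to direct sums of shifted perverse standards on $\Gr_\LD$. Thus the $*$-parity of ${}^p\JC_!(\lambda)$ can be tested against the corresponding question on all Levi subgroups. Iterating this reduction brings us to subgroups of strictly smaller rank that, depending on the type, either lie in the range $p > b(\Phi_\LB)$ or reduce further; the known counterexamples at the minimal nilpotent orbit singularity (cf.~Lemma~\ref{lem-MinuHSR}) are harmless in good characteristic, removing the classical obstruction.

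The main obstacles are twofold. First, the stalk analysis in the first regime: parity does not propagate freely across a perverse short exact sequence, so the argument rests delicately on the combination of perverse-degree bounds for standards and the uniform parity of stratum dimensions. Second, in the second regime, the type $C_n$ case for all primes $p \in [3,n]$ is uniform in neither $n$ nor $p$: since $b(C_n) = n$ grows with the rank, iterated Levi reduction eventually lands in type $C_m$ with $m < p$, but the base case still requires a direct argument --- perhaps by explicit analysis of the singularities in $\overline{\Gr^\lambda}$ for type $C$, or by a structural comparison (for instance through folding) with the type $A$ situation, which is already fully covered by the first regime.
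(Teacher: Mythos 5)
The statement you are trying to prove is labelled a \emph{conjecture} in the paper: the authors give no proof of it, and they explicitly record its status --- it implies $(*)$ for all good $p$, and conversely Achar--Rider proved that $(*)$ implies it, so that Theorem~\ref{thm:T=P} settles it only for $p > b(\Phi)$, leaving ``a handful of cases'' open. Your proposal therefore cannot be matched against a proof in the paper, and as it stands it does not close either regime.

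In the first regime, the step from ``$\EC(\lambda)=\TC(\lambda)$ is parity'' to ``${}^p\JC_!(\lambda)$ is $*$-parity'' is precisely the content of the Achar--Rider theorem, and it is not a formal consequence of the short exact sequence you write down. First, the sequence is oriented the wrong way: for a tilting object one has ${}^p\JC_!(\lambda)\hookrightarrow \TC(\lambda)$ as a subobject, with the quotient filtered by smaller standards; there is in general no surjection $\TC(\lambda)\twoheadrightarrow {}^p\JC_!(\lambda)$. Second, even with the correct sequence $0\to{}^p\JC_!(\lambda)\to\TC(\lambda)\to F\to 0$, the long exact sequence of stalk cohomology on a boundary stratum gives, for $k$ of the ``wrong'' parity, a surjection $H^{k-1}(i_\nu^*F)\twoheadrightarrow H^k(i_\nu^*\,{}^p\JC_!(\lambda))$ whose source is typically nonzero; the perverse bound $i_\nu^*\,{}^p\JC_!(\lambda)\in{}^pD^{\le -1}$ truncates the range of degrees but does not select a parity. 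Your ``should force'' is exactly the missing argument, and Achar--Rider's actual proof proceeds quite differently (via the Iwahori/Wakimoto structure), which is why it is a separate paper.

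In the second regime the reduction is invalid in the direction you need: Theorem~\ref{thm-parityres} says hyperbolic localization \emph{preserves} parity, not that it \emph{detects} it. Knowing that $R^\GD_\LD({}^p\JC_!(\lambda))$ is parity for every Levi $\LD$ (for $\LD=\TD$ this is automatic, being the weight functors) gives no control on the stalks of ${}^p\JC_!(\lambda)$ at non-fixed points --- only local Euler characteristics are matched, not cohomology. So the small good primes ($3\le p\le n$ in type $C_n$, $p\in\{5,\dots,19\}$ in $E_7$, $p\in\{7,\dots,31\}$ in $E_8$; note you omitted $p=19$ and $p=31$, which the theorem's strict inequalities exclude) are not reached by any argument in the paper, and indeed the conjecture remains open there.
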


Note that if Conjecture~\ref{conj-mv} is true, then it implies that
$\GB$ satisfies $(*)$ whenever $p$ is a good prime.

Conversely, since an earlier draft of the current paper was circulated,
Achar-Rider~\cite{ARi} proved that if $\GB$ satisfies $(*)$, then the
Mirkovi\'{c}-Vilonen conjecture is correct.  In particular, using
our Theorem~\ref{thm:T=P}, they settle the conjecture for all but a
handful of cases.

\medskip

Our results also may be used to obtain new proofs of
Theorems~\ref{thm-tensor} and~\ref{thm-res} in most characteristics.
To see this note that for all groups for which the hypothesis of
Theorem~\ref{thm:T=P} applies, our Theorems~\ref{thm-parityconv}
and~\ref{thm-parityres} imply Theorems~\ref{thm-tensor}
and~\ref{thm-res}.  The careful reader will observe that in proving
Theorem~\ref{thm:T=P} we do not use any results that rely on
Theorem~\ref{thm-tensor} or~\ref{thm-res}.

\medskip

Lastly, in Section~\ref{subsec-qchar}, we observe that our main result implies 
the existence (for most characteristics) of $q$-characters for tilting
modules, meaning a natural $q$-analogue of the characters of tilting
modules.

\subsection{Acknowledgments}
We would like to thank Steve Donkin and Jens Carsten Jantzen for substantial
assistance with tilting modules.  The second and third authors are
also very grateful to the Max Planck Institute for wonderful working conditions.

\section{Hyperbolic Localization}
\label{sec-hyperloc}

The goal of this section is to prove Theorem~\ref{thm-parityres}.
In~\ref{subsec:setup}, we recall the notion of hyperbolic localization
and Braden's Theorem.
In~\ref{subsec-evenstalks}, we introduce some simplifying assumptions
and study the hyperbolic localization of parity sheaves in this setting.
In~\ref{subsec-hyperlocGr}, we apply the result
of~\ref{subsec-evenstalks} to the affine Grassmannian and prove
Theorem~\ref{thm-parityres}.

\subsection{Braden's Theorem} \label{subsec:setup}
Let $T$ be a complex torus and $X$ a complex $T$-variety.

We make the following assumption, which is automatic if $X$ is normal by Sumihiro's theorem \cite{Sumihiro, KKLV}:
\begin{equation}\tag{C}\label{affine}
\text{$X$ has a covering by $T$-stable affine open
subvarieties.}
\end{equation}


Let $\chi : \GM_m \to T$ be a cocharacter of $T$.  Our goal is to
understand the hyperbolic localization of $\pi_* \uk_Y$ with respect
to the $\GM_m$-action defined by $\chi$.  We begin by recalling
Braden's definition.

Let $Z \subset X$ denote the variety of $\chi$-fixed points and $Z_1,\ldots,Z_m$ its connected components.
Consider the attracting and repelling varieties for each component $Z_i$, $1 \leq i \leq m$:
\[ Z^+_i = \{ x \in X | \lim_{s \to 0} \chi(s) \cdot x \in Z_i \}, \]
\[ Z^-_i = \{ x \in X | \lim_{s \to \infty} \chi(s) \cdot x \in Z_i \}. \]

Let $Z^+$ (respectively $Z^-$) be the disjoint, disconnected union of the $Z^+_i$ (respectively $Z^-_i$) and the maps $f^\pm : Z \to Z^\pm$ and $g^{\pm}: Z^\pm \to X$ be the component-wise inclusions.  Define projection maps $p^{\pm}: Z^\pm \to Z$ by 
$p^+(x) = \lim_{t \to 0} \chi(t) \cdot x$ and $p^-(x) = \lim_{t \to \infty} \chi(t) \cdot x$.
These are algebraic maps by \cite[Proposition 4.2]{Hesselink}.\footnote{In
this article we only consider attractive sets, which corresponds to
the speed $m = 1$ case in the setting of \cite{Hesselink}. One can
reduce to the affine case using \cite[Lemma 4.4]{Hesselink} because of our
standing assumption that $X$ admits a covering by $T$-stable open
affine subvarieties.}

For this section, we let $D(X)$ denote the constructible derived
category of sheaves of $k$-vector spaces on $X$.

Recall that the hyperbolic localization functors
$(-)^{!*},(-)^{*!}:D(X) \to D(Z)$ for the character $\chi$ are defined by
\[ \FC^{!*} := (f^+)^!(g^+)^* \FC,\]
\[ \FC^{*!} := (f^-)^*(g^-)^! \FC.\]


We will use the following results of Braden:

\begin{thm}[\cite{Br}, Theorem 1]
\label{thm:Braden}
For any $\FC \in D(X)$, there is a natural morphism $\iota_\FC : (\FC)^{*!} \to (\FC)^{!*}$.
If $\FC$ is weakly equivariant (e.g., comes from an object in the equivariant derived category), then
\begin{enumerate}
\item[(i)]
there are natural isomorphisms $\FC^{!*} \cong (p^+)_! (g^+)^* \FC$ and $\FC^{*!} \cong (p^-)_* (g^-)^! \FC$ and
\item[(ii)]
the morphism $\iota_\FC : \FC^{*!} \to \FC^{!*}$ is an isomorphism.
\end{enumerate}
\end{thm}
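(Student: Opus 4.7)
The plan has three stages: construct the natural transformation $\iota_\FC$, prove the equivariant identifications of part (i) via a contraction principle, then reduce part (ii) to an explicit check on Bialynicki-Birula cells.

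First I would construct $\iota_\FC: \FC^{*!} \to \FC^{!*}$ using the formal structure of hyperbolic localization. The key set-theoretic input is that $Z^+ \cap Z^- = Z$ (a point attracting and repelling to the same component must be fixed), so the square with vertices $Z, Z^\pm, X$ and edges $f^\pm, g^\pm$ is Cartesian as a diagram of sets. Together with the identities $p^\pm \circ f^\pm = \id_Z$ and the standard adjunctions $(p^\pm)_! \dashv (p^\pm)^!$, $(p^\pm)^* \dashv (p^\pm)_*$, these data allow one to define $\iota_\FC$ as a canonical composition of unit and counit maps. The construction is natural in $\FC$ and requires no equivariance hypothesis.

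The heart of the argument is part (i), which I would prove via a \emph{contraction principle}: for any weakly $\GM_m$-equivariant sheaf $\mathcal{G}$ on $Z^+$, the natural map $(p^+)_! \mathcal{G} \to (f^+)^! \mathcal{G}$ is an isomorphism, and dually $(f^-)^* \mathcal{H} \to (p^-)_* \mathcal{H}$ is an isomorphism for $\mathcal{H}$ weakly equivariant on $Z^-$. The essential computation is local on the fibers of $p^+$: a fiber is a $\GM_m$-attracting cell, and the pullback of a weakly equivariant sheaf to a generic $\GM_m$-orbit in such a cell is constant, which forces the compactly supported cohomology of the fiber to be canonically isomorphic to the $!$-stalk at the fixed point. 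Applying this principle with $\mathcal{G} = (g^+)^* \FC$ and $\mathcal{H} = (g^-)^! \FC$ --- both weakly equivariant because $\FC$ is --- yields the identifications of part (i).

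For part (ii) I would use a two-step reduction followed by a direct verification. First, hypothesis (\ref{affine}) reduces the problem to $X$ affine, since all the data $(Z^\pm, f^\pm, g^\pm, p^\pm, \iota_\FC)$ restricts compatibly to $T$-stable affine opens. Second, on an affine $T$-variety the $\chi$-action produces a Bialynicki-Birula decomposition by attracting cells $Z_i^+$; via the standard recollement triangles one reduces by dévissage to the case where $\FC$ is the $!$- or $*$-extension of a weakly equivariant sheaf from a single cell. In that case both $\FC^{!*}$ and $\FC^{*!}$ can be computed directly using the contraction principle, and $\iota_\FC$ turns out to be an isomorphism by an explicit calculation. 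The main obstacle is the contraction principle itself: establishing it with enough naturality to ensure compatibility with the construction of $\iota_\FC$ and with the dévissage is the delicate technical point, but once it is in place the rest of the argument reduces to organizational bookkeeping with standard distinguished triangles.
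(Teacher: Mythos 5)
First, a point of orientation: the paper does not prove this statement at all --- it is quoted verbatim from Braden \cite{Br} and used as a black box --- so the relevant comparison is with Braden's published proof. Your construction of $\iota_\FC$ and your treatment of part (i) do follow his actual route: the identification $\FC^{!*} \cong (p^+)_!(g^+)^*\FC$ for weakly equivariant $\FC$ is exactly his contraction lemma (Lemma 6 of \cite{Br}, going back to Springer), applied to $(g^+)^*\FC$ on $Z^+$. One correction to your ``key set-theoretic input'': the square with vertices $Z$, $Z^\pm$, $X$ is \emph{not} Cartesian, since $Z^+\times_X Z^- = \bigsqcup_{i,j} Z_i^+\cap Z_j^-$ contains off-diagonal pieces (for $\PM^1$ with the standard action it contains a copy of $\GM_m$). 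What the construction of $\iota_\FC$ actually requires is that $Z$ be open and closed in $Z^+\times_X Z^-$, i.e.\ that $Z_i^+\cap Z_i^- = Z_i$ for each $i$; under hypothesis (C) this is true, but it is a short argument (reduce to a linear action on a $T$-module), not a tautology, and it fails without (C) (e.g.\ for a nodal cubic).

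The genuine gap is in part (ii). Your d\'evissage along the Bia{\l}ynicki-Birula decomposition does not get off the ground: on a general affine $T$-variety the attracting sets $Z_i^+$ need not cover $X$ (points need not have limits) and need not form a filtrable stratification when $X$ is singular; more importantly, a decomposition adapted to the attracting flow is not adapted to the repelling flow, whereas $\iota_\FC$ compares $(g^-)^!\FC$ on $Z^-$ with $(g^+)^*\FC$ on $Z^+$, so ``reducing to $\FC$ extended from a single attracting cell'' is not meaningful for the functor $(-)^{*!}$. After part (i), the content of (ii) is a concrete comparison --- roughly, that for weakly equivariant $\FC$ on an attractive affine neighborhood the natural map from cohomology supported on $Z^-$ to compactly supported cohomology along $Z^+$ is an isomorphism --- and this is the actual theorem, not bookkeeping with recollement triangles; Braden proves it by a separate computation in the affine case, and the delicacy of exactly this step is why Drinfeld and Gaitsgory later gave an independent proof via the interpolation space over $\AM^1$. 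Your sentence ``$\iota_\FC$ turns out to be an isomorphism by an explicit calculation'' is where the entire difficulty is hidden.
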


Using this result, Braden proves that for $k=\QM$, hyperbolic
localization of the intersection cohomology complex $\ic(X;\QM)$ is a direct sum of
shifted intersection cohomology complexes. Our goal here is to prove a
similar result for certain parity complexes.

\subsection{Parity of stalks at $T$-fixed points}
\label{subsec-evenstalks}

We keep the notation of \S \ref{subsec:setup}.

Let $Y$ be a smooth projective $T$-variety and $\pi:Y \to X$ a
$T$-equivariant proper morphism.
Furthermore, we will assume that:
\begin{enumerate}
\item The sets of $T$-fixed points, $X^T$ and $Y^T$, are finite.
\item For any $T$-fixed point $x \in X^T$, there exists a cocharacter $\GM_m \to T$ such that $\GM_m$ acts attractively on a neighborhood of $x$.
\item There exists a $T$-module $V$, for which $Y$ admits a closed $T$-equivariant embedding into the projective space $\PM(V)$.
\end{enumerate}

\begin{prop} \label{prop:evenstalks}
The cohomology of the stalk of $(\pi_* \uk_Y)^{!*}$ at any $z \in X^T$ is concentrated in even degrees.
\end{prop}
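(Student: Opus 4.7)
The plan is to apply Braden's theorem to express the stalk as a compactly-supported cohomology group, use the Bialynicki-Birula decomposition of $Y$ to cut this into affine-bundle strata, and induct on $\dim Y$, with the degenerate case handled by hypothesis~$(2)$.

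Since $\pi_*\uk_Y$ is $T$-equivariant (hence weakly equivariant), Theorem~\ref{thm:Braden}(i) yields $(\pi_*\uk_Y)^{!*} \cong (p^+)_!(g^+)^*(\pi_*\uk_Y)$. Taking the stalk at $z \in X^T \subseteq Z$ and applying proper base change (since $\pi$ is proper), the stalk is identified with $H^*_c(\pi^{-1}(X_z);\uk)$, where $X_z := (p^+)^{-1}(z) = \{x \in X : \lim_{s\to 0}\chi(s)\cdot x = z\}$. Note that $X_z$, and hence $\pi^{-1}(X_z)$, is $T$-stable because $T$ is abelian and $z$ is $T$-fixed. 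Next, by hypothesis~$(3)$ and Bialynicki-Birula we have $Y = \bigsqcup_F Y^+_F$ indexed by the connected components $F$ of $Y^\chi$, where $p^+_Y : Y^+_F \to F$ is a Zariski-locally trivial affine bundle of some rank $d_F$. Using that $\pi$ is $\chi$-equivariant, one verifies $\pi^{-1}(X_z) \cap Y^+_F = (p^+_Y)^{-1}(F_z)$ with $F_z := F \cap \pi^{-1}(z)$, an affine bundle of rank $d_F$ over $F_z$, so $H^*_c(\pi^{-1}(X_z) \cap Y^+_F) \cong H^{*-2d_F}_c(F_z)$. Filtering $\pi^{-1}(X_z)$ by the induced BB stratification and invoking the long exact sequences in $H^*_c$ together with the standard parity argument (connecting maps between groups of opposite parity vanish), I reduce the problem to showing $H^*_c(F_z)$ is in even degrees for every $F$.

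I induct on $\dim Y$. Each component $F$ is itself a smooth projective $T$-variety (Iversen) with $F^T \subseteq Y^T$ finite; the embedding $Y \hookrightarrow \PM(V)$ restricts to $F$, and $\pi|_F : F \to X$ is proper and $T$-equivariant, so $\pi|_F$ satisfies the hypotheses of the proposition. Since $F$ is projective, $H^*_c(F_z) = H^*(F_z) = ((\pi|_F)_*\uk_F)_z$ by proper base change, and this coincides with the hyperbolic stalk of $(\pi|_F)_*\uk_F$ at $z$ for the trivial cocharacter on $F$ (for which $(-)^{!*}$ is the identity). If $\dim F < \dim Y$, this is in even degrees by the inductive hypothesis. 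The base case is $Y^\chi = Y$ (i.e.\ $\chi$ trivial on $Y$), where $\pi^{-1}(X_z) = Y_z := \pi^{-1}(z)$ directly. Here I invoke hypothesis~$(2)$: pick $\eta_z$ attractive at $z$, and perturb within the open attractive cone in $X_*(T)_\RM$ (possible unless $T$ acts trivially on $Y$, in which case $Y = Y^T$ is finite and the claim is immediate) to ensure $\eta_z$ also acts non-trivially on $Y$, so $\dim Y^{\eta_z} < \dim Y$. For any $y \in Y_z$, $\lim_{s\to 0}\eta_z(s)\cdot y$ lies in $Y_z \cap Y^{\eta_z}$, so $Y_z$ is $\GM_m$-attractive under $\eta_z$ with fixed locus $Y_z \cap Y^{\eta_z}$; the contraction principle then gives $H^*(Y_z) \cong H^*(Y_z \cap Y^{\eta_z}) = ((\pi|_{Y^{\eta_z}})_*\uk_{Y^{\eta_z}})_z$, which is in even degrees by induction applied to $\pi|_{Y^{\eta_z}}$.

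The main obstacle is the base case. Two verifications are delicate: first, the existence of a perturbation of $\eta_z$ staying attractive at $z$ while acting non-trivially on $Y$ (requiring the observation that the attractive cone at $z$ is open and full-dimensional in $X_*(T)_\RM$, hence not contained in the proper subspace of cocharacters acting trivially on $Y$); second, the algebraic contraction principle $H^*(Y_z) \cong H^*(Y_z \cap Y^{\eta_z})$ for the possibly singular $Y_z$. The interplay between the two cocharacters---$\chi$ defining the hyperbolic localization and $\eta_z$ supplied by hypothesis~$(2)$ to run the contraction---is what makes the argument subtle; the inductive step itself is routine once these points are in place.
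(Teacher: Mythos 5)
Your first two reductions (Braden plus proper base change to get $H^*_c\bigl((p^+\circ\pi)^{-1}(z)\bigr)$, then cutting by the Bia{\l}ynicki-Birula strata of $Y$ for $\chi$, using filtrability and the affine-bundle shift to reduce to the pieces $F_z = F\cap\pi^{-1}(z)$) are fine and in the same spirit as the paper. The gap is in the base case, on which your entire induction rests: every application of the inductive hypothesis lands in the trivial-cocharacter situation, i.e.\ in the claim that the \emph{ordinary} stalk $H^*(\pi^{-1}(z))$ is even, and your treatment of that claim is not correct. The asserted ``contraction principle'' $H^*(Y_z)\cong H^*(Y_z\cap Y^{\eta_z})$ fails: $Y_z$ is a \emph{complete} variety, and for a complete variety with nontrivial $\GM_m$-action the limit map $y\mapsto\lim_{s\to 0}\eta_z(s)\cdot y$ is not a morphism (not even continuous), so no contraction/attractor argument applies. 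Concretely, take $X=\mathrm{pt}$, $Y=\PM^1$ with the standard $\GM_m$-action: then $H^*(Y_z)=H^*(\PM^1)=k\oplus k[-2]$ while $H^*(Y_z\cap Y^{\eta_z})=k^{\oplus 2}$, so the isomorphism you invoke is false. The underlying issue is that hypothesis (2) controls the weights of $\eta_z$ on $T_zX$ only; it says nothing about the action on the fiber $Y_z$, which sits entirely over the fixed point $z$.

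The fix goes in the \emph{opposite} direction from the one you chose, and it is exactly the content of the paper's key lemma. To realize $Y_z=\pi^{-1}(z)$ as $(p^+\circ\pi)^{-1}(z)$ for some cocharacter --- so that the BB/filtrability/affine-bundle machinery applies --- you need a cocharacter that \emph{repels} at $z$ (e.g.\ $-\eta_z$), for then the attracting set of $z$ in $X$ is $\{z\}$ itself; taking the BB decomposition of $Y$ for $-\eta_z$ (perturbed so that $\dim Y^{-\eta_z}<\dim Y$) writes $Y_z$ as a filtrable union of affine bundles over the $F'\cap Y_z$, and your induction then closes. The paper packages both steps into one by producing a single cocharacter $\zeta=m\chi+\rho$ ($\rho$ with negative weights on $T_zX$, $m\gg 0$) that attracts on $V^+$, repels on $V^0\oplus V^-$, and has $Y^\zeta=Y^T$ finite, so that all BB cells of $Y$ are affine spaces, $(p^+\circ\pi)^{-1}(z)$ is a union of them, and no induction on $\dim Y$ is needed. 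As written, your argument does not establish the proposition; repaired as above it becomes essentially the paper's proof.
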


\begin{proof}
The push-forward $\pi_* \uk_Y$ is weakly equivariant and thus by (i) of Theorem \ref{thm:Braden}, $(\pi_* \uk_Y)^{!*} \cong (p^+)_! (g^+)^* \pi_* \uk_Y$. 

Let $C^+$ be defined by the Cartesian square
\[
\xymatrix{
C^+ \ar[r] \ar[d] & Y \ar[d] \\
Z^+ \ar[r] & X .
}
\]
In other words, it is the disjoint union of the preimages $\pi^{-1}(Z_i^+)$, $1
\leq i \leq m$. We abuse notation and also write $\pi$ for the induced
map $\pi : C^+ \to Z^+$. The map $\pi$ is proper and so by base change
we have 
\[ (\pi_* \uk_Y)^{!*} = (p^+)_!  \pi_! \uk_{C^+}. \]

The stalk of $(\pi_* \uk_Y)^{!*}$ at $z$ thus has cohomology $H^*_c ( (p^+ \circ
  \pi)^{-1}(z))$. 
Using the following lemma, we will construct a Bia{\l}ynicki-Birula
decomposition on $Y$ which restricts to one on $(p^+ \circ
\pi)^{-1}(z)$.

\begin{lem}
For any $z \in X^T$, consider the direct sum decomposition of the
Zariski tangent space $T_z X \cong V^+ \oplus V^0 \oplus V^-$, such that
$\chi$ acts on $V^+$, $V^0$, and $V^-$ with positive,
zero, and negative weights respectively.  There exists a cocharacter $\zeta:
\GM_m \to T$ whose action on $T_z X$ is attractive
on $V^+$ and repulsive on $V^0 \oplus V^-$, and such that $Y^\zeta = Y^T$.
\end{lem}

\begin{proof}
Let $\YB(T)$ denote the cocharacters of $T$.
The set of cocharacters which act on $T_z X$ with negative weights is
the intersection of $\YB(T)$ with an open cone in $\YB(T)\otimes \RM$.
By assumption \ref{subsec-evenstalks}(2), this intersection is nonempty.
In other words, there exists a $\rho \in \YB(T)$ that acts on $T_z X$
with negative weights.

At each fixed point $y\in Y^T$, the action of $T$ on $T_y Y$ splits as
a direct sum of characters. By assumption \ref{subsec-evenstalks}(1),
the set $\{\beta_i\}$ of all characters of $T$ obtained in this way is
finite.  As $Y$ is smooth, for any cocharacter $\sigma \in \YB(T)$,
$Y^\sigma = Y^T$ if and only if $\sigma$ is not contained in one of
the (finitely many) hyperplanes $\langle \sigma,\beta_i \rangle = 0$.
Thus we can choose $\rho$ above such that it acts on $T_z X$ with
negative weights and $Y^\rho = Y^T$.

Let $\{\alpha_k^+\}$ denote the set of characters of $T$ occurring in $V^+$.
Thus $\langle \chi,\alpha^+_k \rangle$ is positive and $\langle
\rho,\alpha^+_k \rangle$ is negative.  For $m \in \ZM$ sufficiently
large, $\langle m \chi + \rho, \alpha^+_k \rangle = m \langle
\chi,\alpha^+_k \rangle + \langle \rho,\alpha^+_k \rangle$ is positive
for all $k$.  Thus for $m$ large enough the cocharacter $m \chi +
\rho$ acts attractively on $V^+$.  On the other hand, for any
$m > 0$, $m\chi + \rho$ has strictly negative weights on $V^0 \oplus
V^-$. 

Lastly, as $\rho$ has been chosen such that $\langle \rho, \beta_i
\rangle \neq 0$ for all $i$,  $\langle m \chi + \rho,\beta_i\rangle$
will also be non-zero for $m$ sufficiently large.

For such an $m$, we may define $\zeta$ to be $m\chi + \rho$.
\end{proof}

Consider the attracting Bia{\l}ynicki-Birula decomposition of $Y$ with
respect to $\zeta$.  By the assumption \eqref{affine} in Section \ref{subsec:setup}, a $T$-stable neighborhood of
$z$ embeds $T$-equivariantly into the Zariski tangent space $T_z X$.
By the construction of $\zeta$, $(p^+)^{-1}(z)$ is thus the attracting set of
$z$ for the action of $\zeta$.  If $y \in Y$, then $\lim_{t \to 0}
\zeta(t) y \in \pi^{-1}(z)$ if and only if $y \in (p^+ \circ \pi)^{-1}(z)$.
It follows that the space $(p^+ \circ \pi)^{-1}(z)$ is a union of cells
in the decomposition. 

By assumption \ref{subsec-evenstalks}(3), the cell
decomposition is filtrable \cite{BiBi:filt}, hence the
fundamental classes of the cells give a basis for $H^*_c ( (p^+ \circ
\pi)^{-1}(z) )$, which is therefore concentrated in even degrees.

This concludes the proof of Proposition \ref{prop:evenstalks}.
\end{proof}

\begin{prop} \label{prop:evencostalks}
The cohomology of the costalk of $(\pi_* \uk_Y)^{!*}$ at any $z \in X^T$ is concentrated in even degrees.
\end{prop}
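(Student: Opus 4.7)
The plan is to reduce the costalk assertion for the cocharacter $\chi$ to the stalk assertion of Proposition~\ref{prop:evenstalks}, applied to the opposite cocharacter $-\chi$, via Verdier duality. The first observation is that Proposition~\ref{prop:evenstalks} applies equally well with $-\chi$ in place of $\chi$: its standing assumptions (the three enumerated conditions in \S\ref{subsec-evenstalks} together with~\eqref{affine}) involve only the $T$-action on $X$ and $Y$, not the choice of cocharacter. Hence $i_z^*\bigl((\pi_*\uk_Y)^{!*,-\chi}\bigr)$ has cohomology concentrated in even degrees for every $z \in X^T$.

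The core step is the natural identification
\[
\DM\bigl((\pi_*\uk_Y)^{!*,\chi}\bigr) \;\cong\; (\pi_*\uk_Y)^{!*,-\chi}[2d], \qquad d := \dim_{\CM} Y.
\]
I would derive this from the formal Verdier-duality identity $\DM \circ (f^+)^!(g^+)^* \cong (f^+)^*(g^+)^! \circ \DM$. Negating $\chi$ swaps the attracting and repelling data ($Z^{\pm,-\chi} = Z^{\mp,\chi}$), so $(f^+)^*(g^+)^!$ is precisely the functor $(-)^{*!,-\chi}$. Applying Theorem~\ref{thm:Braden}(ii) to $-\chi$ and the weakly equivariant object $\DM(\pi_*\uk_Y)$ then identifies it with $(-)^{!*,-\chi}\DM(\pi_*\uk_Y)$. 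Smoothness of $Y$ and properness of $\pi$ give $\DM(\pi_*\uk_Y) \cong (\pi_*\uk_Y)[2d]$, completing the identification.

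The rest is formal: combining $i_z^! \cong \DM_{\pt} \circ i_z^* \circ \DM$ with the above yields
\[
i_z^!\bigl((\pi_*\uk_Y)^{!*,\chi}\bigr) \;\cong\; \DM_{\pt}\bigl(i_z^*(\pi_*\uk_Y)^{!*,-\chi}\bigr)[-2d],
\]
so $H^i$ of the costalk at $z$ is canonically dual to $H^{2d-i}$ of the stalk of $(\pi_*\uk_Y)^{!*,-\chi}$ at $z$, which by the first paragraph vanishes unless $2d-i$ is even, equivalently unless $i$ is even. The main obstacle is the middle step: one must carefully track the swap of attracting and repelling data under $\chi \mapsto -\chi$ and invoke Theorem~\ref{thm:Braden}(ii) for $-\chi$ in order to reinterpret the Verdier-dual functor $(f^+)^*(g^+)^!$ as an actual hyperbolic localization. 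Once this identification is in place, the parity conclusion is automatic because the shift $2d$ is even.
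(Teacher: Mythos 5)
Your argument is correct and is essentially the paper's proof: both reduce the costalk claim to Proposition~\ref{prop:evenstalks} applied to $-\chi$ via the identification $\DM((\pi_*\uk_Y)^{!*,\chi}) \cong (\pi_*\uk_Y)^{!*,-\chi}[2\dim Y]$, using Braden's isomorphism from Theorem~\ref{thm:Braden}(ii), the formal Verdier-duality swap of $(-)^{!*}$ and $(-)^{*!}$ under $\chi\mapsto-\chi$, and $\DM(\pi_*\uk_Y)\cong(\pi_*\uk_Y)[2\dim Y]$. The only (immaterial) difference is the order of the two middle steps: the paper first invokes Theorem~\ref{thm:Braden}(ii) for $\chi$ and then dualizes, whereas you dualize first and invoke (ii) for $-\chi$.
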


\begin{proof}
It suffices to show that the stalk of the Verdier dual $\DM((\pi_*
\uk_Y)^{!*})$ is concentrated in even degrees.  For the duration of
this proof, let us write $(-)^{!*,\chi}$ for the hyperbolic
localization with respect to $\chi$ to emphasize the dependence on
$\chi$. Then we have:
\[\DM((\pi_* \uk_Y)^{!*,\chi}) \cong \DM((\pi_*
\uk_Y)^{*!,\chi}) = (\DM(\pi_* \uk_Y))^{!*,-\chi} = (\pi_*
\uk_Y)^{!*,-\chi}[2 \dim Y],\]
where the first isomorphism is given by Theorem~\ref{thm:Braden}(ii),
the second by the definition of hyperbolic localization and the third
because $\pi$ is proper and $Y$ is smooth.  The cohomology of the
stalk at $z \in X^T$ of the right hand side is concentrated in even
degrees by Proposition~\ref{prop:evenstalks}.
\end{proof}

\subsection{Hyperbolic localization on the affine Grassmannian}
\label{subsec-hyperlocGr}

We now specialize to the case: $X$ is the affine Grassmannian $\Gr$
and $T$ is the maximal torus $\TD \subset \GD$.

Recall that the connected components of $\Gr$ are parametrized by the
group $Z(\GB)^\vee$ of characters of the center $Z(\GB) \subset \GB$.
For any $\zeta \in Z(\GB)^\vee$, let $\Gr_\zeta$ denote the
corresponding connected component.  For any $\FC \in D(\Gr)$,
we write $\FC = \oplus_{\zeta \in Z(\GB)^\vee} \FC_\zeta$ where
$\FC_\zeta$ is supported on $\Gr_\zeta$.

Fix a Levi subgroup $\LB \subset \GB$ containing the maximal torus
$\TB$.  Correspondingly, there is a Levi subgroup $\LD$ of $\GD$
containing $\TD$ whose roots are dual to those of $\LB$. Let $\chi$ be
the cocharacter of $\TD$ defined by $2\rho_\GB - 2\rho_\LB$, where
$\rho_\GB$ (resp. $\rho_\LB$) denotes the half-sum of the positive roots of
$\GB$ (resp. $\LB$).  The set of $\chi$-fixed points of $\Gr$ is
$\Gr_{\LD}$.  We denote by $R^{\GD}_{\LD}: D(\Gr) \to D(\Gr_{\LD})$
the shifted hyperbolic localization functor:
\[ R^{\GD}_{\LD}(\FC) = \bigoplus_{\zeta \in Z(\LB)^\vee}
(\FC^{*!})_\zeta [\langle \zeta, 2\rho_{\LD} - 2\rho_{\GD} \rangle].\]
As shown in~\cite[5.3.27-31]{BD}, $R^{\GD}_{\LD}$ is t-exact and
corresponds under geometric Satake to the restriction functor
$\Rep(\GB) \to \Rep(\LB)$. (This generalizes the Mirkovi\'{c}-Vilonen
weight functors which are the case when $\LD = \TD$.)

We can now use Proposition~\ref{prop:evenstalks} to prove
Theorem~\ref{thm-parityres}.

\begin{proof}[Proof of Theorem~\ref{thm-parityres}]
Recall from \cite[Theorem 4.6]{JMW2} and its proof, that every
indecomposable parity complex is a direct summand of the push forward
of the constant sheaf from a generalized Bott-Samelson resolution
$f:\Bs \to \overline{\Gr^\lambda}$.  Thus it suffices to show that $R^{\GD}_{\LD}(f_*
\uk_{\Bs})$ is a parity complex.  

Let $\TD \times \CM^*$ act on $\Bs$ and $\overline{\Gr^\lambda}$,
where $\CM^*$ acts by `loop-rotation'.  We will now show that $\TD
\times \CM^*$ and $f:\Bs \to \overline{\Gr^\lambda}$ satisfy the
assumptions of~\ref{subsec:setup} and~\ref{subsec-evenstalks} on $T$,
$X$, and $Y$.  Then by Proposition~\ref{prop:evenstalks}
(resp.~\ref{prop:evencostalks}), $((f_* \uk_{\Bs})^{*!})_\zeta $ has
$*$-even stalks (resp. costalks) at the $\TD \times \CM^*$-fixed
points (equivalently the $\TD$-fixed points) of $\Gr_{\LD}$.  But
$(f_* \uk_{\Bs})^{*!}$ is also $\LD(\OC)$-equivariant and every
$\LD(\OC)$-orbit contains a $\TD$-fixed point, thus $(f_*
\uk_{\Bs})^{*!}$ is an even complex and $R^{\GD}_{\LD}(f_* \uk_{\Bs})$
is parity.

Thus it remains to check the assumptions~\eqref{affine} of Section~\ref{subsec:setup}
and (1--3) of Section \ref{subsec-evenstalks}. We will outline why they are valid for any
Schubert variety in any partial flag variety of a Kac-Moody group (of which the varieties
$\overline{\Gr^\lambda}$ are a special case).

Assume that $\GC$ is a Kac-Moody group with maximal torus $\TC$,
Weyl group $W$ and simple reflections $S$. In
~\cite[Chapter VII]{Ku} it is shown that any Schubert 
variety in a partial flag variety for $\GC$ embeds into
the projectivization of a finite dimensional
$\TC$-representation. Also, the $\TC$-fixed points on any Schubert
variety are parametrized by an ideal in the Bruhat order on $W/W_I$,
where $W$ denotes the Weyl group and $W_I \subset W$ is a standard
parabolic subgroup. In particular, the $\TC$-fixed points on any
Schubert variety are finite. Recall (see e.g. \cite[\S 7,
Def.-Prop. 1]{GL}) that generalized Bott-Samelson resolutions may be
embedded as closed subvarieties of products of Schubert varieties for
$\GC$. It follows that ~\ref{subsec:setup}~\eqref{affine}
and~\ref{subsec-evenstalks}(1) and (3) hold for Schubert varieties and
their Bott-Samelson resolutions.

Finally, the $\TC$-fixed point corresponding to $w \in W/W_I$ in any
Schubert variety is attractive. Indeed, all weights in the tangent
space belong to the set $-w(R^+)$, where $R^+ \subset \XB(\TC)$
denotes the positive real roots. Hence if $\chi \in \YB(\TC)$ is a
cocharacter which is negative on all simple roots (which exists
because the simple roots are linearly independent in $\XB(\TC)$) then
$w \cdot \chi$ acts attractively at the fixed point corresponding to
$w$. Hence~\ref{subsec-evenstalks}(2) is satisfied for any Kac-Moody
Schubert variety.
\end{proof}

\section{Tilting modules}
\label{sec-tilting}

The aim of this section, which may be read independently of the
previous one, is to prove our main result, Theorem~\ref{thm:T=P}.

\subsection{Tilting objects in highest weight categories}
\label{subsec-hwc}


Let $\CC$ be a highest weight category with poset $\LL$ and standard (resp. costandard) objects $\Delta_\ll$ (resp. $\nabla_\ll$) for each $\ll \in \LL$.

Let $\FC(\Delta) \subset \CC$ (resp. $\FC(\nabla) \subset \CC$) denote
the full subcategory of (co)standard filtered objects, meaning $X \in
\CC$ and $X$ has a filtration whose successive quotients are (co)standard objects.  Thus $\FC(\Delta) \cap \FC(\nabla) \subset \CC$ is the full subcategory of tilting objects.

The following theorem is due to Ringel~\cite[Theorem 4 and 4*]{Ringel} and gives a useful criterion for determining if an object is tilting.

\begin{thm}
\label{thm-Ext1}
An object $X\in \FC(\Delta)$ if and only if $\Ext^1(X,\nabla_\ll)=0$  for all $\ll \in \LL$.
Dually, $X\in \FC(\nabla)$ if and only if $\Ext^1(\Delta_\ll,X)=0$  for all $\ll \in \LL$.
\end{thm}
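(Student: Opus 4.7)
I would handle the two equivalences separately. The second (about $\FC(\nabla)$) follows from the first by passing to the opposite category $\CC^{\op}$, which is again highest weight with the roles of standards and costandards interchanged.

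The forward direction ($X \in \FC(\Delta) \Rightarrow \Ext^1(X, \nabla_\ll) = 0$) is routine. I would induct on the length of a $\Delta$-filtration of $X$: the base case $\Ext^1(\Delta_\mu, \nabla_\ll) = 0$ is built into the definition of a highest weight category (standards and costandards are $\Ext$-orthogonal in positive degrees), and the inductive step is an immediate application of the long exact sequence of $\Ext$ to a short exact sequence $0 \to \Delta_\mu \to X \to X' \to 0$ with $X'$ of shorter filtration.

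The converse is the real content, and I would argue by induction on the composition length of $X$. Given a nonzero $X$ satisfying the hypothesis, choose a maximal element $\mu$ of the finite poset $\supp(X) \subset \LL$, set $V := \Hom_\CC(\Delta_\mu, X)$, and consider the trace $Y := \mathrm{tr}_{\Delta_\mu}(X) \subseteq X$, i.e.\ the image of the evaluation map $\Delta_\mu \otimes_k V \to X$. The first claim is that $Y \cong \Delta_\mu^{\oplus n}$ with $n = \dim V$: the only composition factors of $\Delta_\mu^{\oplus n}$ of weight $\mu$ sit in the head, and a nontrivial kernel element whose projection onto the head was nonzero would produce a linear relation among a basis of $V$, contradicting linear independence; combined with the projectivity of $\Delta_\mu$ in the truncation $\CC_{\leq\mu}$, the maximality of $\mu$ in $\supp(X)$ then forces the kernel to vanish.

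It remains to verify that $X' := X/Y$ still satisfies $\Ext^1(X', \nabla_\ll) = 0$ for all $\ll$. Applying $\Hom_\CC(-, \nabla_\ll)$ to $0 \to Y \to X \to X' \to 0$ yields
\[
\Hom(X, \nabla_\ll) \to \Hom(Y, \nabla_\ll) \to \Ext^1(X', \nabla_\ll) \to \Ext^1(X, \nabla_\ll) = 0.
\]
For $\ll \neq \mu$ the middle term vanishes since $\Hom(\Delta_\mu, \nabla_\ll) = 0$ by the orthogonality of standards and costandards in a highest weight category. For $\ll = \mu$ one needs the restriction $\Hom(X, \nabla_\mu) \to \Hom(Y, \nabla_\mu)$ to be surjective; this should follow from the universal property of the trace, since every map $Y \to \nabla_\mu$ factors through the head $L_\mu^{\oplus n}$ of $Y$, which is matched with the corresponding $\mu$-part of the head of $X$ by the very definition of $V$. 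Induction then gives $X' \in \FC(\Delta)$, and combining with $Y \in \FC(\Delta)$ yields $X \in \FC(\Delta)$. I expect the main obstacle to be the $\ll = \mu$ case above, which is where one actually uses that $Y$ was formed as the full trace rather than some smaller $\Delta_\mu$-subobject.
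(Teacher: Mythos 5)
The paper does not prove this statement itself; it cites Ringel, so there is no in-paper argument to match yours against. Your overall strategy for the converse --- induct on length, pick $\mu$ maximal in $\supp(X)$, peel off the $\Delta_\mu$-trace $Y$ and pass to $X'=X/Y$ --- is the standard one, and the forward direction and the dualization step are fine. But there is a genuine gap in the ``first claim,'' namely that the evaluation map $\phi\colon \Delta_\mu\otimes_k V\to X$ is injective (so $Y\cong\Delta_\mu^{\oplus n}$).

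What your argument actually shows is that $\ker\phi$ meets the head $L_\mu^{\oplus n}$ trivially, i.e.\ $\ker\phi\subset(\rad\Delta_\mu)^{\oplus n}$: projectivity of $\Delta_\mu$ applied to $0\to\ker\phi\to\Delta_\mu^{\oplus n}\to Y\to 0$ gives $\Hom(\Delta_\mu,\ker\phi)=0$. That alone does \emph{not} force $\ker\phi=0$. For instance take $\LL=\{1<2\}$ with $\Delta_2$ of length two (head $L_2$, socle $L_1$) and $X=L_2$: then $\mu=2$ is maximal, $V=\Hom(\Delta_2,L_2)$ is one-dimensional, and $\phi$ is the quotient map $\Delta_2\onto L_2$, with kernel $L_1\neq 0$. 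Of course this $X$ violates the $\Ext^1$-hypothesis, and $L_2\notin\FC(\Delta)$; the point is that the hypothesis is what kills the kernel, and it does not do so through ``projectivity plus maximality'' alone.

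The standard fix reverses your order. First show $X'$ satisfies $\Ext^1(X',\nabla_\lambda)=0$: for $\lambda\neq\mu$ this follows as you say from $\Hom(Y,\nabla_\lambda)=0$; for $\lambda=\mu$ the cleanest route is to note that $\nabla_\mu$ is injective in the truncated category $\CC_\pi$, $\pi$ the lower ideal generated by $\supp(X)$, and $X'\in\CC_\pi$. (Your ``matching of heads'' argument for surjectivity of $\Hom(X,\nabla_\mu)\to\Hom(Y,\nabla_\mu)$ is vaguer than it needs to be.) Then apply the inductive hypothesis to get $X'\in\FC(\Delta)$, from which $\Ext^i(X',\nabla_\lambda)=0$ for all $i\ge 1$. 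Only now conclude $\ker\phi=0$: from the two long exact sequences one gets, for every $\lambda$,
\[
\Hom(\ker\phi,\nabla_\lambda)\;\cong\;\Ext^1(Y,\nabla_\lambda)\;\hookrightarrow\;\Ext^2(X',\nabla_\lambda)=0,
\]
and a nonzero module $M$ always admits a nonzero map $M\onto L_\nu\hookrightarrow\nabla_\nu$, so $\ker\phi=0$. With that, $Y\cong\Delta_\mu^{\oplus n}$ and $X$ is an extension of two objects of $\FC(\Delta)$. So the ideas are right, but the logical ordering must be rearranged: the injectivity of $\phi$ is a \emph{consequence} of the induction, not a preliminary structural fact.
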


We also mention the following result of Donkin \cite[Proposition
A4.4]{Donkin:qschur} that will be used in the appendix:

\begin{prop}
\label{prop-resol}
An object $X$ of $\CC$ is in $\FC(\nabla)$ (resp. $\FC(\Delta)$) if and only if it admits a
finite left (resp. right) resolution by tilting modules.
\end{prop}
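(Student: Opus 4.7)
The plan is to reduce both directions of the equivalence to Ringel's criterion (Theorem~\ref{thm-Ext1}) via some standard $\Ext$-vanishings in the highest weight category $\CC$. I will focus on the $\FC(\nabla)$ case; the $\FC(\Delta)$ case is entirely dual. The two ingredients I will use repeatedly are (a) the textbook vanishing $\Ext^i(\Delta_\mu, \nabla_\lambda) = 0$ for all $i \geq 1$, and (b) its consequence, obtained by d\'evissage along the $\nabla$- or $\Delta$-filtration of a tilting module $T$, that
\[ \Ext^i(\Delta_\mu, T) = 0 = \Ext^i(T, \nabla_\lambda), \quad \text{and more generally} \quad \Ext^i(T, Y) = 0 \ \text{for every } Y \in \FC(\nabla), \]
for every $i \geq 1$.

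For the direction ($\Leftarrow$), suppose $X$ admits a finite left tilting resolution
\[ 0 \to T_n \to T_{n-1} \to \cdots \to T_0 \to X \to 0. \]
Applying $\Hom(\Delta_\lambda, -)$ and dimension-shifting through the resolution, the vanishing $\Ext^i(\Delta_\lambda, T_j) = 0$ for $i \geq 1$ yields $\Ext^1(\Delta_\lambda, X) = 0$ for every $\lambda \in \LL$, whence $X \in \FC(\nabla)$ by Theorem~\ref{thm-Ext1}.

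For the converse, I would build the resolution in two stages. First I would handle $X = \nabla_\lambda$ by induction on $\lambda$, using the short exact sequence $0 \to K_\lambda \to T(\lambda) \to \nabla_\lambda \to 0$ in which $K_\lambda$ lies in $\FC(\nabla)$ with successive quotients $\nabla_\mu$ satisfying $\mu < \lambda$; the induction hypothesis furnishes a finite tilting resolution of $K_\lambda$, which concatenates with $T(\lambda)$ to produce one for $\nabla_\lambda$. Second, for general $X \in \FC(\nabla)$, I would induct on the length of a $\nabla$-filtration, splicing a short exact sequence $0 \to X' \to X \to \nabla_\lambda \to 0$ via the horseshoe lemma; this step is legitimate precisely because $\Ext^1(T, X') = 0$ for every tilting module $T$ appearing in the resolution of $\nabla_\lambda$, which is exactly the vanishing highlighted above.

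The main obstacle I anticipate is ensuring finiteness of the resolution: both the induction on $\lambda$ in the base case and the induction on filtration length require well-foundedness of $\LL$ and a finite $\nabla$-filtration of $X$. In the setting of the appendix (rational representations of $\GB$, viewed inside an appropriate truncated highest weight subcategory), every $X$ involves only finitely many weights bounded above by a fixed dominant weight, and the induction terminates after finitely many steps. The $\FC(\Delta)$ statement is obtained by dualizing the entire argument: replace $T(\lambda) \twoheadrightarrow \nabla_\lambda$ by the injection $\Delta_\lambda \hookrightarrow T(\lambda)$ with cokernel in $\FC(\Delta)$ supported on strictly smaller weights, and apply the other half of Theorem~\ref{thm-Ext1}.
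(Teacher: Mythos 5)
Your argument is correct, and it is essentially the standard proof of this fact: the paper itself does not prove Proposition~\ref{prop-resol} but cites Donkin \cite[Proposition A4.4]{Donkin:qschur}, whose argument runs along exactly the lines you give (dimension shifting against $\Ext^{i}(\Delta_\mu,\nabla_\lambda)=0$ for $i\geq 1$ in one direction; in the other, induction using $0\to K_\lambda\to T(\lambda)\to\nabla_\lambda\to 0$ and a horseshoe-type splicing whose lifting obstruction dies because tilting modules are $\Ext$-orthogonal to $\FC(\nabla)$ in positive degrees). You also correctly flag the only point requiring care, namely the well-foundedness and finiteness needed for the inductions to terminate, which holds in the highest weight categories considered here.
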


\subsection{Parity and Tilting}
\label{subsec-paritytilt}

In this section we apply the tilting criterion, Theorem~\ref{thm-Ext1}, to parity sheaves. We
first briefly recall the setting of~\cite{JMW2}.

Let $H$ denote a connected linear complex algebraic group. Let $X$ be
a complex algebraic variety (resp. $H$-variety) together with an
algebraic stratification $X = \sqcup_{\ll \in \LL} X_\ll$ into smooth
locally closed (resp. $H$-stable) subsets.  We let $D(X)$, or
$D(X;k)$, denote the bounded (equivariant) constructible derived
category of $k$-sheaves on $X$.

Let $P(X)$ or $P(X;k)$ denote the abelian subcategory of $D(X)$
obtained as the heart of the perverse t-structure for the middle
perversity.  The objects in this category are (equivariant) perverse
sheaves and we denote the simple objects, which are parametrized by
strata $X_\ll$ and irreducible (equivariant) local systems $\LC$, by
$\ic(\ll,\LC)$, or simply $\ic(\ll)$ when $\LC= \uk_{X_\ll}$ is the
constant sheaf.

Recall that for any choice of a function $\dagger : \LL \to \ZM/2$,
which we refer to as a pariversity, there are notions of ($*$- or
$!$-) even, odd and parity complexes.  Unless stated otherwise, the
pariversity is assumed to be the trivial function $\natural: \LL \to
\ZM/2$, $\natural(\ll)=0$.  Recall that the dimension (or diamond)
pariversity $\Diamond: \LL \to \ZM/2$ is the function for which
$\Diamond(\ll)$ is given by the parity of the dimension of the stratum
$X_\ll$.  Note that if all of the strata are even dimensional, it is
equal to the $\natural$-pariversity.

We assume that for each stratum $X_\ll$ and each ($H$-equivariant)
local system $\LC$ on $X_\ll$, the (equivariant) cohomology
$H^i(X_\ll, \LC) = 0$ vanishes for all $i$ odd.

The following criterion provides a technique for showing that parity sheaves are tilting.

\begin{prop}
\label{prop-paritytilting}
Let $X$ be as above. Assume moreover that the strata of $X$ are simply
connected and that $P(X)$ is a highest weight category with highest
weight poset equal to the closure ordering on the set of strata and
whose standard (resp. costandard) objects are given by the perverse
extension sheaves,  ${}^p\JC_{!}(\lambda):={}^p j_{\lambda !}
\uk_{\lambda}[d_\lambda]$ (resp. ${}^p\JC_{*}(\lambda):={}^p j_{\lambda *}
\uk_{\lambda}[d_\lambda]$).

If a complex $\EC$ on $X$ is perverse and parity with respect to the dimension pariversity $\Diamond$, then it is tilting.
\end{prop}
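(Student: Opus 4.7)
The plan is to verify Ringel's criterion (Theorem~\ref{thm-Ext1}) for a standard filtration, and then obtain the costandard filtration by duality. Observe first that Verdier duality $\DM$ preserves both perversity and $\Diamond$-parity (it swaps $*$- and $!$-stalks and negates cohomological degrees, so it preserves the dimension pariversity), while exchanging ${}^p\JC_!(\lambda)$ with ${}^p\JC_*(\lambda)$. Hence if every perverse $\Diamond$-parity complex lies in $\FC(\Delta)$, applying this to $\DM\EC$ yields $\EC \in \FC(\nabla)$ as well, so $\EC$ is tilting. By Theorem~\ref{thm-Ext1}, the task reduces to showing that
\[
\Ext^1(\EC, {}^p\JC_*(\lambda)) = 0 \quad \text{for every stratum } X_\lambda.
\]

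To deal with the perverse truncation defining the costandard, I would set $K := j_{\lambda *}\uk_\lambda[d_\lambda]$. Since $j_\lambda$ factors as an open immersion followed by a closed one, $j_{\lambda *}$ is left t-exact for the perverse t-structure, so $K \in {}^pD^{\geq 0}$ and ${}^p\JC_*(\lambda) = {}^p\HC^0(K)$. The resulting perverse truncation triangle
\[
{}^p\JC_*(\lambda) \longto K \longto L \longtriright
\]
has $L \in {}^pD^{\geq 1}$. Applying $\Hom(\EC,-)$ produces the exact sequence
\[
\Hom(\EC,L) \longto \Ext^1(\EC, {}^p\JC_*(\lambda)) \longto \Hom(\EC, K[1]).
\]
The left-hand term vanishes by t-structure orthogonality, since $\EC \in {}^pD^{\leq 0}$ while $L \in {}^pD^{\geq 1}$. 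So it suffices to show $\Hom(\EC, K[1]) = 0$.

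This last vanishing I would handle by adjunction: $\Hom(\EC,K[1]) \cong \Hom(j_\lambda^*\EC, \uk_\lambda[d_\lambda+1])$. The $\Diamond$-parity hypothesis forces the cohomology sheaves of $j_\lambda^*\EC$ to sit in degrees of parity $d_\lambda \pmod 2$; by simple connectivity of $X_\lambda$ these cohomology sheaves are (equivariantly) constant, and the odd-cohomology vanishing hypothesis on each stratum kills the relevant $\Ext^1$'s, so $j_\lambda^*\EC$ splits as $\bigoplus_k \uk_\lambda^{n_k}[-k]$ with each $k$ of the same parity as $d_\lambda$. Each summand contributes $H^{d_\lambda+1+k}(X_\lambda, \uk_\lambda)$ to the target Hom group; since $d_\lambda + 1 + k$ is odd, every such contribution vanishes by hypothesis.

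The main subtlety will be the splitting of $j_\lambda^*\EC$ into shifted constant sheaves: this is the one place where the parity of $\EC$, the simple connectivity of the strata, and the odd-cohomology vanishing must all conspire. Once the splitting is in hand, the rest is a formal manipulation of the perverse truncation triangle together with the $(j_\lambda^*, j_{\lambda *})$ adjunction.
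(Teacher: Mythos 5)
Your proof follows the same route as the paper's: Ringel's $\Ext^1$-criterion, the perverse truncation triangle for $j_{\lambda *}\uk_\lambda[d_\lambda]$, adjunction along $(j_\lambda^*, j_{\lambda *})$, and parity vanishing on the open stratum, with the costandard side obtained by Verdier duality. The only difference is that you unpack the final vanishing $\Hom(j_\lambda^*\EC, \uk_\lambda[d_\lambda+1])=0$ via an explicit splitting of $j_\lambda^*\EC$ into shifted constant sheaves, where the paper simply invokes the general fact that odd-degree morphisms between $\Diamond$-even complexes vanish.
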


\begin{proof}
By Proposition~\ref{thm-Ext1}, it suffices to demonstrate for any $\lambda\in\Lambda$ the vanishing
\[ \Ext^1(\EC,{}^p\JC_{*}(\lambda)) =0 = \Ext^1({}^p\JC_{!}(\lambda), \EC). \]
We prove the first equality.  The second follows by duality.

Consider the distinguished triangle
\[ {}^p\JC_{*}(\lambda) \to  j_{\lambda *} \uk_{\lambda}[d_\lambda] \to A \to\]
where $A =  {}^p\tau_{>0} j_{\lambda *} \uk_{\lambda}[d_\lambda] \in {}^p D^{>0}$.
By applying $\Ext^i(\EC,-)$ to this distinguished triangle in the constructible or equivariant derived category, one obtains a long exact sequence
\[ \ldots \to \Hom(\EC, A) \to \Ext^1(\EC, {}^p\JC_{*}(\lambda)) \to \Ext^1(\EC, j_{\lambda *} \uk_{\lambda}[d_\lambda]) \to  \ldots \]
The term $\Hom(\EC,A)=0$ because $\EC$ is perverse and $A \in {}^p D^{>0}$. By adjunction $\Ext^1(\EC, j_{\lambda *} \uk_{\lambda}[d_\lambda]) = \Ext^1(j_{\lambda}^* \EC, \uk_\lambda[d_\lambda])$.  By the parity assumptions and the fact that $X_\lambda$ is a single stratum, $j_{\lambda}^* \EC$ is $\Diamond$-even.  On the other hand $\uk_\lambda[d_\lambda]$ is also $\Diamond$-even, which implies that the $\Ext^1$ between them vanishes.  We have shown that the left and right terms in the sequence above vanish and therefore the middle term does too.
\end{proof}

\begin{remark}
The assumption that the strata be simply connected is made purely for
the sake of exposition.  The obvious analogue with that assumption
removed is true and proven by the same method.
\end{remark}

\subsection{A key observation}

We now restrict our attention to the affine Grassmannian for $\GD$.
Recall the notation from Section~\ref{subsec-parity}.  As mentioned
there, the affine Grassmannian satisfies the conditions needed to
define parity sheaves.  On the other hand, $P(\Gr)$ is also a highest
weight category by the geometric Satake theorem. Thus, we can use the
previous proposition in this setting.  Together with
Theorem~\ref{thm-parityconv} it gives a weak version of
Theorem~\ref{thm-tensor}.

\begin{lem}
\label{lem-obser}
Suppose $T_1$ and $T_2$ are tilting modules for $\GB$ such that the
corresponding tilting sheaves, $\TC_1$ and $\TC_2$, on $\Gr$ are
parity.  Then 
\begin{enumerate}
\item The convolution product $\TC_1 \star \TC_2$ is perverse and
  parity, and
\item The tensor product $T_1\otimes T_2$ is tilting.
\end{enumerate}
\end{lem}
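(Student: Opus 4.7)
The plan is to establish (1) directly from the cited theorems and then derive (2) by transferring the conclusion of (1) through the geometric Satake equivalence and applying Proposition~\ref{prop-paritytilting}.

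For (1), I first use that the convolution $\star: D(\Gr)\times D(\Gr)\to D(\Gr)$ is t-exact for the perverse t-structure (recalled in \S\ref{subsec-parity}). Since $\TC_1,\TC_2\in P(\Gr)$, it follows that $\TC_1\star\TC_2$ is perverse. Parity is immediate from Theorem~\ref{thm-parityconv}, which asserts that convolution preserves parity.

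For (2), geometric Satake intertwines $\star$ with the tensor product of $\GB$-representations, so $\TC_1\star\TC_2$ corresponds to $T_1\otimes T_2$, and it suffices to show that $\TC_1\star\TC_2$ is a tilting sheaf in $P(\Gr)$. By (1) it is a perverse parity complex, so the natural strategy is to apply Proposition~\ref{prop-paritytilting}. The hypotheses to check for $X=\Gr$ are standard: the orbits $\Gr^\lambda$ are simply connected (as affine bundles over $\GD$-partial flag varieties) with equivariant cohomology of local systems concentrated in even degrees, and $P(\Gr)$ is a highest weight category whose standard and costandard objects are the perverse extensions of shifted constant sheaves, by geometric Satake.

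The delicate point, and the main obstacle I anticipate, is reconciling pariversities: Proposition~\ref{prop-paritytilting} is formulated for the dimension pariversity $\Diamond$, whereas Theorem~\ref{thm-parityconv} yields only the $\natural$-parity that is the paper's default. To address this I would use the identity $\langle 2\rho_{\GD},\alpha_i^\vee\rangle=2$ for every simple coroot: since $\mu\le\lambda$ in the closure order on $\Gr$ means that $\lambda-\mu$ is a non-negative sum of simple coroots, the orbit dimensions $d_\mu=\langle 2\rho_{\GD},\mu\rangle$ all share a common parity throughout any single closure $\overline{\Gr^\lambda}$. Writing $\TC_1\star\TC_2=\bigoplus_i\EC(\lambda_i)$ (no nonzero cohomological shift can occur in this decomposition, since the top-stratum restriction of any shifted summand would then violate perversity), the global $\natural$-parity of the sum forces each $d_{\lambda_i}$ to share the same residue mod $2$, and on each $\overline{\Gr^{\lambda_i}}$ the conditions of $\natural$- and $\Diamond$-parity coincide. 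Proposition~\ref{prop-paritytilting} then applies summand-by-summand and identifies $\TC_1\star\TC_2$ as a tilting sheaf, completing (2).
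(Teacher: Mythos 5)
Your proof follows the same two steps as the paper's: t-exactness of $\star$ plus Theorem~\ref{thm-parityconv} for part (1), then geometric Satake plus Proposition~\ref{prop-paritytilting} for part (2). The genuine added value of your write-up is that you notice a point the paper leaves implicit: Proposition~\ref{prop-paritytilting} is stated for the dimension pariversity $\Diamond$, while Theorem~\ref{thm-parityconv} gives parity only for the default pariversity $\natural$. Your reconciliation is essentially correct and is exactly what one should say: since $\langle\alpha_i^\vee,2\rho_{\GD}\rangle=2$, the orbit dimensions $d_\mu$ have constant parity on each connected component of $\Gr$, so $\natural$-parity and $\Diamond$-parity are equivalent notions on the support of each indecomposable summand, and Proposition~\ref{prop-paritytilting} applies.

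One small inaccuracy: the claim that ``the global $\natural$-parity of the sum forces each $d_{\lambda_i}$ to share the same residue mod $2$'' is neither needed nor literally true. A $\natural$-parity complex is by definition a direct sum of an even part and an odd part, so having summands $\EC(\lambda_i)$ with $d_{\lambda_i}$ of both parities is perfectly compatible with $\natural$-parity (and does occur if $T_1$ or $T_2$ is decomposable across connected components). The correct and sufficient observation is just that on the closure $\overline{\Gr^{\lambda_i}}$ of each summand's support, $\Diamond$ differs from $\natural$ by a constant, so $\natural$-parity $\Leftrightarrow$ $\Diamond$-parity summand by summand; the common-residue assertion can be dropped.
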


\begin{proof} (1)
The convolution product $\TC_1 \star \TC_2$ is both parity by
Theorem~\ref{thm-parityconv} and perverse as $\star$ is t-exact.

(2)
The tensor product $T_1 \otimes T_2$ corresponds under geometric
Satake to the convolution product $\TC_1 \star \TC_2$.
By Proposition~\ref{prop-paritytilting} and (1), it is therefore tilting.
\end{proof}

\subsection{Reduction to simple simply-connected groups}
\label{subsec-reduction}

Our goal is to prove Theorem \ref{thm:T=P}.  We begin with the
following reduction.

\begin{lem}
\begin{enumerate}
\item \label{lem-derived}
$\GB$ satisfies $(*)$ if and only if its derived group
  $\DC(\GB)$ does.
\item \label{lem-cover}
Let $Z \subset \GB$ be a finite central subgroup and $\HB =
  \GB/Z$ be the quotient.  If $\GB$ satisfies $(*)$, then so does
  $\HB$.
\item \label{lem-product}
If $\GB = \GB_1 \times \ldots \times \GB_k$ is a product of
  connected reductive groups $\GB_i$ and each $\GB_i$ satisfies $(*)$, then $\GB$ satisfies $(*)$.
\end{enumerate}
\end{lem}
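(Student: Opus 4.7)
The plan is to treat the three parts separately, exploiting the fact that both sides of $(*)$ --- tilting modules and parity sheaves --- behave compatibly with direct products and central isogenies of reductive groups. Since the root system is preserved when passing to the derived group or to a central isogeny quotient, and since $b(\Phi)$ is defined as a maximum over irreducible components, the bound from Theorem~\ref{thm:T=P} is consistent with all three reductions; what really needs checking is that $\EC(\lambda)$ and $\TC(\lambda)$ transform compatibly under these operations.

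For part (\ref{lem-product}), I would observe that if $\GB = \prod_i \GB_i$, then $\GD = \prod_i \GD_i$ and $\Gr_\GD \cong \prod_i \Gr_{\GD_i}$ as ind-$\GD(\OC)$-varieties. Dominant weights factor as $\lambda = (\lambda_i)$, the tilting modules split as external tensor products $T(\lambda) \cong \boxtimes_i T(\lambda_i)$, and the parity sheaves split as $\EC(\lambda) \cong \boxtimes_i \EC(\lambda_i)$. Both decompositions are compatible with the geometric Satake equivalence, and external tensor products of tilting (resp.\ parity) objects remain tilting (resp.\ parity), so $(*)$ for each $\GB_i$ yields $(*)$ for $\GB$ by induction on $k$.

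For part (\ref{lem-derived}), the key observation is that $\GB$ and $\DC(\GB)$ differ only by a central torus: there is a central isogeny $Z(\GB)^\circ \times \DC(\GB) \twoheadrightarrow \GB$ with finite kernel. Dually, $\GD$ and $\DC(\GB)^\vee$ also differ only by a central torus factor, and since the affine Grassmannian of a torus is a discrete union of points, each connected component of $\Gr_\GD$ is canonically isomorphic to a connected component of $\Gr_{\DC(\GB)^\vee}$; under this identification, $\GD(\OC)$-orbits correspond to $\DC(\GB)^\vee(\OC)$-orbits and parity sheaves are identified. In parallel, restriction along $\DC(\GB) \hookrightarrow \GB$ sends each indecomposable tilting $T^\GB(\lambda)$ to an indecomposable tilting $T^{\DC(\GB)}(\overline{\lambda})$, where $\overline\lambda$ is the image of $\lambda$ in the weight lattice of $\DC(\GB)$, and every indecomposable tilting of $\DC(\GB)$ arises in this way. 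These parallel bijections intertwine the two sides of $(*)$, so the property transfers in both directions.

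For part (\ref{lem-cover}), I would use that $\Rep(\HB) \hookrightarrow \Rep(\GB)$ realizes $\Rep(\HB)$ as the full subcategory of representations on which $Z$ acts trivially, and $\Lambda^+(\HB)$ sits inside $\Lambda^+(\GB)$ as the sublattice of weights annihilating $Z$. For $\lambda \in \Lambda^+(\HB)$, the indecomposable tilting modules coincide, $T^\HB(\lambda) = T^\GB(\lambda)$. Dually, $\HD$ is an isogeny cover of $\GD$, and each connected component of $\Gr_\HD$ maps isomorphically onto a connected component of $\Gr_\GD$, under which the corresponding parity sheaves are identified. Hence $(*)$ for $\GB$ applied to $\lambda \in \Lambda^+(\HB)$ gives $(*)$ for $\HB$. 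The main obstacle throughout is just bookkeeping: verifying carefully that geometric Satake, parity sheaves, and tilting sheaves interact with central isogenies and direct products in the clean way indicated, so that the two sides of $(*)$ really correspond under the identifications. Once these compatibilities are in place, each of the three statements is immediate.
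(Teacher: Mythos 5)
Your proof is correct, and the geometric identifications you use --- the Langlands-dual short exact sequences, the resulting trivial coverings and component inclusions between affine Grassmannians, and the product decomposition in part (3) --- coincide with those in the paper. The difference lies in what you then do with them: you track both sides of $(*)$ through each reduction, verifying separately that indecomposable tilting modules and parity sheaves correspond. The paper instead exploits the remark, made immediately before the Lemma, that by Proposition~\ref{prop-paritytilting} the property $(*)$ is \emph{equivalent} to the purely geometric condition that every $\EC(\lambda)$ be perverse. Since parity and perversity depend only on the orbit closure as a stratified variety, parts (1) and (2) then follow as soon as one observes that the relevant orbit closures match, with no need to discuss $T(\lambda)$ or $\TC(\lambda)$ at all; your version has to additionally justify representation-theoretic compatibilities (inflation identifying $T^{\HB}(\lambda)$ with $T^{\GB}(\lambda)$, indecomposability of external tensor products of tilting modules) which the paper simply never touches. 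For part (3) the two arguments are closest: the paper likewise forms $\boxtimes_i \EC(\lambda_i)$, identifies it with $\EC(\lambda)$, and deduces perversity --- hence the tilting property --- from the factors, without invoking the corresponding decomposition of $T(\lambda)$.
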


\begin{proof}
(1)
The short exact sequence $1 \to \DC(\GB) \to \GB \to
\GB/\DC(\GB) \to 1$ is Langlands dual to the short exact sequence
$1 \to Z(\GD)^0 \to \GD \to \GD/Z(\GD)^0 \to 1$.  The latter gives rise to maps
\[\Gr_{Z(\GD)^0} \to \Gr \to \Gr_{\GD/Z(\GD)^0},\]
which express $\Gr$ as a trivial cover of $\Gr_{\GD/Z(\GD)^0}$ with
fiber $\Gr_{Z(\GD)^0}$.  Thus every orbit closure in $\Gr$ is
isomorphic to an orbit closure in $\Gr_{\GD/Z(\GD)^0}$ and vice
versa.

\medskip

(2)
Let $\HD$ denote the Langlands dual of $\HB$.  The map $\GB \to \HB$
is dual to a finite covering map $\HD \to \GD$.  The latter induces an
inclusion of connected components $\Gr_{\HD} \hookrightarrow \Gr$.
Thus every orbit closure in $\Gr_{\HD}$ is isomorphic to an orbit
closure in $\Gr$.

\medskip

(3)
A dominant weight $\lambda$ of $\GB$ is a tuple $(\lambda_1, \ldots,\lambda_k)$ of dominant weights for each $\GB_i$.  The closure of $\Gr^\lambda$ in $\Gr$ is the product of the closures of $\Gr^{\lambda_i}$ in each $\Gr_{\GD_i}$.  Consider the box product $\EC = \EC(\lambda_1) \boxtimes \ldots \boxtimes \EC(\lambda_k)$ of parity sheaves.  It is parity, indecomposable, supported on the closure of $\Gr^\lambda$ and $\EC|_{\Gr^\lambda} = \uk_{\Gr^\lambda} [\dim \Gr^\lambda]$.  Thus $\EC = \EC(\lambda)$.  By assumption, the $\EC(\lambda_i)$ are each perverse and tilting.  Thus $\EC = \EC(\lambda)$ is also perverse and hence tilting.
\end{proof}

By part (\ref{lem-derived}) of the Lemma we can replace $\GB$ by its
derived subgroup $\DC(\GB)$, which is semisimple.  Any semisimple
group is a quotient of a product of simple simply connected
groups by a finite central subgroup.  Thus by parts~(\ref{lem-cover})
and~(\ref{lem-product}), it suffices to determine for each simple
simply-connected group if $(*)$ is satisfied.

\subsection{Minuscule weights and the highest short root}

We now assume that $\GB$ is simple and simply-connected.
We first check the theorem in two special cases.

\begin{lem}
\label{lem-MinuHSR}
Let $\mu$ be a minuscule highest weight and $\hsr$ denote the highest
short root of $\GB$. Then
\begin{enumerate}
\item $\EC(\mu)=\TC(\mu)=\ic(\mu)$;
\item if $p$ is a good prime for $\GB$, then $\EC(\hsr)$ is perverse and
 $\EC(\hsr)=\TC(\hsr)$.
\item if $p$ is a good prime for $\GB$ and moreover $p \nmid n +
  1$ in type $A_n$, resp. $p\nmid n$ in type $C_n$, then
  $\EC(\hsr)=\TC(\hsr) = \ic(\hsr)$.
\end{enumerate}
Although it is not necessary for what follows, we also note:
\begin{enumerate}
\item[(4)] In any characteristic, ${}^p\HC^0\EC(\hsr)$ is tilting.
\end{enumerate}
\end{lem}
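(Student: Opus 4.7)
My plan is to treat the four claims in turn, exploiting the very restricted stratification of the relevant orbit closures in $\Gr$.

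For (1), the key observation is that a minuscule weight $\mu$ is minimal in the dominance order among dominant weights in its coset modulo the coroot lattice of $\GD$; equivalently, all weights of the Weyl module $\Delta_\mu$ lie in a single $W$-orbit. Consequently $\Gr^\mu$ is already closed in $\Gr$, smooth, and projective (a partial flag variety for $\GD$), so that $\uk_{\Gr^\mu}[\dim\Gr^\mu]$ is simultaneously $\EC(\mu)$ and $\ic(\mu)$. On the representation-theoretic side, $\Delta_\mu$ is simple and self-dual in every characteristic (each weight has multiplicity one), so $L(\mu) = \Delta_\mu = \nabla_\mu = T(\mu)$. Geometric Satake then gives $\EC(\mu) = \ic(\mu) = \TC(\mu)$.

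For (2) and (3), I would exploit that $\overline{\Gr^{\hsr}}$ has only two strata---the open $\Gr^{\hsr}$ and the origin $\{\mathrm{pt}\}$---and that $\dim\Gr^{\hsr} = 2\langle\rho,\hsr\rangle$ is even, so the dimension pariversity $\Diamond$ coincides with $\natural$. To prove (2) I would compute the stalk of $\EC(\hsr)$ at the origin by means of a convenient $\GD(\OC)$-equivariant resolution $\pi : \wt Z \to \overline{\Gr^{\hsr}}$, for example the one built from the cotangent bundle of the minuscule partial flag variety corresponding to the minimal nilpotent orbit. In good characteristic the integral cohomology of the fibre $\pi^{-1}(0)$ is torsion-free and concentrated in even degrees, so a parity decomposition of $\pi_*\uk_{\wt Z}$ forces $\EC(\hsr)$ to be $*$-even with stalks at the origin in the perverse range. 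Once perversity is known, Proposition~\ref{prop-paritytilting} applies (since $\Diamond=\natural$) and identifies $\EC(\hsr)$ with the indecomposable tilting sheaf $\TC(\hsr)$. Part (3) then amounts to showing that under the stronger hypothesis on $p$ the integral link cohomology at the origin has no $p$-torsion at all---the relevant obstruction being $(n{+}1)$-torsion in type $A_n$ and $n$-torsion in type $C_n$---so that $\ic(\hsr)$ is itself $*$-even, and therefore equal to $\EC(\hsr)$.

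Part (4) will be the main obstacle, since in bad characteristic $\EC(\hsr)$ need not be perverse and Proposition~\ref{prop-paritytilting} cannot be applied directly. My plan is to exploit the two-stratum structure once more: any perverse sheaf on $\overline{\Gr^{\hsr}}$ restricting to $\uk_{\Gr^{\hsr}}[\dim\Gr^{\hsr}]$ on the open stratum fits into a short exact sequence involving $\ic(\hsr)$ and a sum of copies of the skyscraper $\ic(0) = \uk_0$. By Theorem~\ref{thm-Ext1}, to show $\p\HC^0\EC(\hsr)$ is tilting it suffices to verify
\[
\Ext^1\bigl(\p\HC^0\EC(\hsr),\, \p\JC_*(\ll)\bigr) = 0 = \Ext^1\bigl(\p\JC_!(\ll),\, \p\HC^0\EC(\hsr)\bigr)
\]
for $\ll \in \{0,\hsr\}$. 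The case $\ll=\hsr$ is immediate from support considerations, while the case $\ll=0$ reduces by adjunction to an analysis of the (co)stalk of $\p\HC^0\EC(\hsr)$ at the origin. The parity of $\EC(\hsr)$, though insufficient in bad characteristic to make $\EC(\hsr)$ itself perverse, should be strong enough to control its perverse cohomology sheaves and to force the desired $\Ext^1$ vanishings.
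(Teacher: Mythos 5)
Your part (1) is correct and coincides with the paper's argument: $\Gr^\mu$ is closed, so $\ic(\mu)={}^p\JC_!(\mu)={}^p\JC_*(\mu)=\uk_\mu[d_\mu]=\EC(\mu)=\TC(\mu)$. Part (3) is in the right spirit (the obstruction is torsion in the standard sheaf at the origin), but you delegate the actual computation; the paper gets it from \cite[Prop.~4.23]{JMW2}, which gives a short exact sequence $0 \to i_*(k\otimes_\ZM H) \to \p\JC_!(\hsr) \to \ic(\hsr) \to 0$ with $H$ the fundamental group of the long-root subsystem. Parts (2) and (4), however, each contain a genuine gap.

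For (2), your mechanism cannot work as stated. Outside type $A$ the natural resolution of the minimal-orbit singularity is not a cotangent bundle: it is the total space of a line bundle over $\PM(\OC_{\mathrm{min}})\cong \check G/P$, of dimension $2h^\vee-2$, whose fibre over the origin is $\check G/P$ itself, of dimension $2h^\vee-3$. This is very far from semismall, so the stalk of $\pi_*\uk[\dim]$ at the origin occupies degrees up to $+(2h^\vee-4)$, well outside the perverse range. Moreover the integral cohomology of $\check G/P$ is free and concentrated in even degrees in \emph{every} characteristic, so "even, torsion-free fibre cohomology'' does not see the prime $p$ at all; if your implication were valid it would prove $\EC(\hsr)$ perverse for all $p$, contradicting the failures at bad primes that the paper itself records immediately after Theorem~\ref{thm:T=P}. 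The good-prime hypothesis actually enters through the \emph{decomposition} of $\pi_*\uk$ into indecomposable summands --- equivalently through the maps from costalks to stalks (an intersection-form computation) whose elementary divisors involve the torsion primes. The paper imports exactly this from \cite[Prop.~4.22]{JMW2} and then glues the resulting perverse parity extension of $\uk_{\{0\}}$ to $\uk_{\Gr^\hsr}[d_\hsr]$ before applying Proposition~\ref{prop-paritytilting}.

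For (4), the reduction via Theorem~\ref{thm-Ext1} and the case $\ll=\hsr$ are fine, but the case $\ll=0$ is precisely the hard point and is \emph{not} a formal consequence of parity. Vanishing of $\Ext^1(\p\JC_!(0),\p\HC^0\EC(\hsr))$ amounts to the costalk $i_0^!\,\p\HC^0\EC(\hsr)$ being concentrated in degree $0$, and by the gluing description of $\p\HC^0$ on a two-stratum space this depends on the actual maps $\HC^j(i_0^!\EC(\hsr))\to\HC^j(i_0^*\EC(\hsr))$, not merely on their parity; no purely formal argument from evenness forces the required vanishing. The paper's proof supplies the missing geometric input: it constructs an explicit two-step Bott--Samelson resolution $m:BS\to\ov{\Gr^\hsr}$ (a $\PM^1$-bundle over $\check G/P$) and invokes \cite[Lemma~4.21(1)]{JMW2} to produce a short exact sequence
\[
0 \to \p\JC_!(\hsr) \to \p\HC^0\, m_*\uk_{BS}[\dim BS] \to \p\HC^0\, j_{0*}j_0^*\, m_*\uk_{BS}[\dim BS] \to 0,
\]
which is a standard filtration; self-duality of $\p\HC^0 m_*\uk_{BS}[\dim BS]$ then yields the costandard filtration, and $\p\HC^0\EC(\hsr)$ is a direct summand. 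You would need to either reproduce such a construction or prove the $\Ext^1$-vanishing at $\ll=0$ by a concrete stalk/costalk computation.
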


\begin{proof} 
(1) The $\GD(\OC)$-orbit in $\Gr$ corresponding to the minuscule
highest weight $\mu$ is closed, thus 
$\ic(\mu) = {}^p\JC_!(\mu) = {}^p\JC_*(\mu) = \uk_\mu[d_\mu]$,
which implies $\EC(\mu) = \TC(\mu) = \uk_\mu[d_\mu]$.

\medskip

(2) 
Recall~\cite[2.3.3]{MOV} that the orbit closure $\ov\Gr^\hsr$ consists of
two strata, a point $\Gr^0$ and its complement
$\Gr^{\hsr}$, and the singularity is equivalent to 
that of the orbit closure of the minimal orbit of the corresponding
nilpotent cone of $\check{\gg}=\mathrm{Lie}(\GD)$.
Therefore we can apply the results of Section 4.3 of~\cite{JMW2}, for
the group $\GD$.

When $p$ is a good prime for $\GB$, it is in particular not one of the
primes listed in \cite[4.22(5)]{JMW2}.  It follows from Proposition
4.22 of~\cite{JMW2}, that there is a perverse parity extension of the
constant sheaf on the minimal orbit in $\check{\gg}$.  By gluing such
a parity complex with the constant sheaf on $\Gr^{\hsr}$, we obtain an
indecomposable perverse parity complex, constructible with respect to the
$\GD(\OC)$-orbits.  It is thus parity and perverse, so by Proposition~\ref{prop-paritytilting}, $\EC(\hsr)=\TC(\hsr)$.

\medskip

(3)
As a consequence of \cite[Proposition 4.23]{JMW2}, we have a short exact sequence:
\[
0 \longto i_* (k \otimes_\ZM H) \longto \p\JC_!(\hsr) \longto \ic(\hsr)
\longto 0
\]
(where $H$ is defined as the fundamental group of the root
  system consisting of the long
  roots of $\check{G}$, see \cite[Proposition 4.23]{JMW2}.)
So $\p\JC_!(\hsr) \simeq \ic(\hsr) \simeq \p\JC_*(\hsr) \simeq
\TC(\hsr)$ as
soon as $p$ does not divide $H$. Assuming that $p$ is good, we only
need to add the conditions stated for $A_n$ and $C_n$.

(4)
Let us regard $\Gr$ as a flag variety for the affine Kac-Moody group
$\GC$ associated to $\check{G}$ (see e.g. \cite[Example
4.2]{JMW2}). Let us parametrize the simple roots of $\GC$ by $\{ 0,
\dots, \ell \}$ so that $\Delta = \{ 1, \dots, \ell\}$ corresponds to the simple
roots of $\check{G}$. For any subset $I \subset \{0, \dots, \ell \}$ one
has a standard parabolic subgroup $\PC_I \subset \GC$. Let $J$
denote the subset of $\Delta$ of simple roots
which are orthogonal to the highest root of $\check{G}$ (i.e. those simple roots
corresponding to nodes which are not connected to the exceptional node
in the affine Dynkin diagram of $\check{G}$). Now consider the Bott-Samelson
space
\[
BS := \PC_{\Delta} \times_{\PC_J} \PC_{J \cup \{ \alpha_0 \}} \times_{\PC_J}
\PC_\Delta / \PC_{\Delta}.
\]
Then $BS$ is a $\PC_{J \cup \{ \alpha_0 \}} / {\PC_J} \cong
\PM^1$ bundle over $\PC_{\Delta}/{\PC_J} = \check{G}/P$ (where
$P \subset \check{G}$ denotes the standard parabolic subgroup
corresponding to $J \subset \Delta$). From this one can deduce that $\dim_{\CM} BS
= \dim \check{G}/P + 1$ and that
the $\check{T}$-fixed points on $BS$ are parametrized by $W/W_J \times
\{ id, s_0 \}$, where $W$ denotes the Weyl group of $\check{G}$ and
$s_0$ denotes the affine simple reflection. Now consider the map induced by multiplication:
\[
m : BS \to \GC/ \PC_{\Delta} = \Gr.
\]
We leave it as an exercise for the reader to show that $m$ is a
resolution of $\overline{\Gr^\hsr}$.

Let $\PC := f_* \uk_{\Bs}[\dim BS]$. 
By \cite[Lemma 4.21(1)]{JMW2} we have a short exact sequence:
\begin{equation}
\label{eqn-ses}
0 \to {}^p\JC_!(\hsr) \to {}^p\HC^0 \PC \to \p \HC^0 j_{0*} j_0^* \PC \to 0 
\end{equation}
This is a filtration by standard sheaves and, as
${}^p\HC^0\PC$ is self-dual (${}^p\HC^0$ is preserved by duality), duality gives a filtration by
costandard sheaves. Thus ${}^p\HC^0\PC$ is tilting.
\end{proof}

\subsection{Fundamental weights}

\begin{prop}
\label{prop-fund} Let $\GB$ be simple and simply connected with root system $\Phi$.  If $p > b(\Phi)$ (see Table~\ref{table:primes}), then for each fundamental weight $\varpi_i$,
\[ \EC(\varpi_i) = \TC(\varpi_i).\]
\end{prop}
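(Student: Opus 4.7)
The strategy is to verify the proposition case by case over the irreducible root systems $\Phi$, reducing each fundamental weight either to one of the situations already treated in Lemma \ref{lem-MinuHSR} or to a tensor product argument built on those cases.

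The central reduction is the following. By Proposition \ref{prop-paritytilting}, if $\EC(\varpi_i)$ is perverse then it is automatically tilting; together with the uniqueness in Theorem \ref{thm-RingelTilt} and the defining support/restriction condition on $\EC(\varpi_i)$, this forces $\EC(\varpi_i) = \TC(\varpi_i)$. Equivalently, using Lemma \ref{lem-obser}, it suffices to realize $\TC(\varpi_i)$ as a direct summand of a convolution $\EC(\mu_1) \star \cdots \star \EC(\mu_r)$ in which each $\EC(\mu_j)$ is already known to be perverse, tilting and equal to $\TC(\mu_j)$.

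I first dispatch the easy weights. If $\varpi_i$ is minuscule, Lemma \ref{lem-MinuHSR}(1) applies directly and gives $\EC(\varpi_i) = \TC(\varpi_i) = \ic(\varpi_i)$. If $\varpi_i$ equals the highest short root $\hsr$, then since the bound $p > b(\Phi)$ forces $p$ to be a good prime in every type, Lemma \ref{lem-MinuHSR}(2) yields $\EC(\hsr) = \TC(\hsr)$. These two situations already exhaust type $A_n$ (all fundamental weights are minuscule, so $b(\Phi) = 1$ suffices) and types $B_n, D_n$ (each fundamental weight is either minuscule or equal to $\hsr$; the bound $b = 2$ is exactly the requirement that $p$ be odd, i.e.\ good).

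For the remaining types $C_n, G_2, F_4, E_6, E_7, E_8$ and for each fundamental weight $\varpi_i$ which is neither minuscule nor $\hsr$, the plan is to exhibit $T(\varpi_i)$ as a direct summand of a tensor product $T(\mu_1) \otimes \cdots \otimes T(\mu_r)$ of tilting modules whose labels $\mu_j$ are minuscule or equal to $\hsr$. By Lemma \ref{lem-obser}(1) the corresponding convolution $\EC(\mu_1) \star \cdots \star \EC(\mu_r)$ is then perverse and parity, so every indecomposable summand is perverse and parity; in particular $\TC(\varpi_i)$ is parity and, by indecomposability and the support condition, it must coincide with $\EC(\varpi_i)$. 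The main obstacle is thus to produce, in each exceptional case, an explicit tensor product in which $T(\varpi_i)$ splits off as a direct summand over a field of characteristic $p > b(\Phi)$. Over $\mathbb{Q}$ this is a character calculation; the role of the bound $p > b(\Phi)$ is to guarantee that the multiplicities used to extract the summand remain controlled modulo $p$, and in particular that no unwanted collapse occurs under reduction. The unusually large bounds $b(E_7) = 19$ and $b(E_8) = 31$ reflect the scarcity of small fundamental representations in these root systems (and the complete absence of a minuscule one for $E_8$), which forces the use of longer convolutions and correspondingly larger primes in the multiplicity analysis.
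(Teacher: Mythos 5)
Your overall strategy --- realize $T(\varpi_i)$ as a direct summand of a tensor product of tilting modules already known to correspond to parity sheaves, then invoke Lemma~\ref{lem-obser} and indecomposability --- is the same as the paper's, but there are two genuine gaps. First, your claim that in types $B_n$ and $D_n$ every fundamental weight is either minuscule or equal to $\hsr$ is false once the rank is at least $3$ (resp.\ $5$): in $B_n$ only $\varpi_n$ is minuscule and $\hsr=\varpi_1$, so $\varpi_2,\dots,\varpi_{n-1}$ are covered by neither case, and in $D_n$ the minuscule weights are $\varpi_1,\varpi_{n-1},\varpi_n$ while the highest root is $\varpi_2$, leaving $\varpi_3,\dots,\varpi_{n-2}$ untouched. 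These types genuinely require the tensor-product step (the paper decomposes $V(\varpi_n)^{\otimes 2}$, and in type $D$ also $V(\varpi_n)\otimes V(\varpi_{n-1})$, to produce the missing fundamental weights), so your assertion that the ``easy weights'' exhaust $B_n$ and $D_n$ is not a harmless shortcut but an error.

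Second, you defer precisely the content that makes the argument work and that produces the bounds in Table~\ref{table:primes}. The key criterion is that every Weyl module $V(\mu)$ occurring as a characteristic-zero summand of the chosen tensor product must \emph{remain simple modulo $p$}: then $V(\mu)=L(\mu)=T(\mu)$ is an indecomposable tilting module, and a character comparison forces the characteristic-$p$ tensor product of tilting modules to decompose exactly as in characteristic zero, exhibiting $T(\varpi_i)$ as a summand of a parity convolution. Saying that ``the multiplicities remain controlled modulo $p$'' is not a substitute for this; the numbers $19$ and $31$ in types $E_7$, $E_8$ are exactly the thresholds below which some $V(\mu)$ in the explicit lists becomes reducible (detected via Jantzen's sum formula and L\"ubeck's tables), and without writing down the decompositions and checking simplicity there is no proof. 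Two smaller points: to use $T(\hsr)$ as a building block in the character comparison you need part (3) of Lemma~\ref{lem-MinuHSR} (so that $T(\hsr)=V(\hsr)$ and its character is the Weyl character), not merely part (2); and in types $E_7$ and $E_8$ one cannot take all factors minuscule or equal to $\hsr$ --- the paper bootstraps, using fundamental tilting modules constructed at an earlier stage (e.g.\ $T(\varpi_6)$ in $E_7$) as factors in later tensor products.
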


\begin{proof}
We use the following method: first we express the
Weyl modules with fundamental highest weights in characteristic zero
as direct summands of tensor products of Weyl modules corresponding to
minuscule weights or the highest short root.

By part (1) of Lemma~\ref{lem-MinuHSR}, we have that $\EC(\mu) = \TC(\mu) = {}^p\JC_!(\mu)$ for any minuscule weight $\mu$, and by part (3), if $p$ is good and $\Phi$ not of type $A_n$ or $C_n$, then $\EC(\alpha_0) = \TC(\alpha_0) = {}^p\JC_!(\alpha_0)$.  Thus, in characteristic $p$, the analogous tensor product of Weyl modules corresponds under to the geometric Satake theorem to a perverse sheaf $\EC$ obtained as a convolution product of parity sheaves.  The perverse sheaf $\EC$ is therefore parity by Theorem~\ref{thm-parityconv} and tilting by Lemma~\ref{lem-obser}.  In particular, any summand of $\EC$ is perverse, parity and tilting.

On the other hand, if we know that the Weyl modules appearing as direct
summands in the tensor products in characteristic $0$ remain simple in
characteristic $p$ (and hence are indecomposable tilting modules),
then by comparing the characters we can conclude that the same 
decomposition occurs as in characteristic 0.

Thus, we need to know for which primes the Weyl modules remain simple.  This has already been done for us and the answers may be found in \cite{Jan}, \cite{Jan-H1}, \cite{Lub} or \cite{McNinch}.  The careful reader should observe that these results are logically independent of Theorems~\ref{thm-tensor} and~\ref{thm-res}.  In particular, they are obtained via Jantzen's sum formula.

In what follows, we use Bourbaki's notation \cite[Planches]{BOUR456}
for roots, simple roots, fundamental weights, etc.
For $\lambda\in\Lambda^+$, we denote by $V(\lambda)$ the Weyl module
of highest weight $\lambda$ over $\QM$.

\subsubsection{Type $A_n$}

All fundamental weights are minuscule, so there is nothing to prove.

\subsubsection{Type $B_n$}

The weight $\varpi_n$ is minuscule, and we have
\[
V(\varpi_n)^{\otimes 2} \simeq
V(2\varpi_n) \oplus V(\varpi_{n-1}) \oplus \ldots
 \oplus V(\varpi_1) \oplus V(0),
\]
all these Weyl modules being simple modulo $p$ as soon as $p > 2$
(see \cite[II.8.21]{Jan} and the references therein, particularly
\cite[Remark 3.4]{McNinch}). So we can generate all fundamental
tilting modules when $p > 2$, as claimed.

\subsubsection{Type $C_n$}

The weight $\varpi_1$ is minuscule, and for $1 \leq i \leq n$, we have
\[
\Lambda^i V(\varpi_1) \simeq V(\varpi_i) \oplus V(\varpi_{i-2}) \oplus \cdots
\]
where for convenience we set $\varpi_{0} = 0$.
All these Weyl modules remaining simple modulo $p$ as soon as
$p > n$ (again, see \cite[II.8.21]{Jan} and \cite[Remark
3.4]{McNinch}).
Moreover, the $i$-th exterior power splits as a summand of
the $i$-th tensor product for $p > i$, so $p > n$ is always
sufficient. It follows that we can generate all fundamental tilting
modules when $p > n$.

\subsubsection{Type $D_n$}

The weights $\varpi_{n-1}$ and $\varpi_n$ are minuscule, and we have
\[
\begin{array}{ccl}
V(\varpi_n)^{\otimes 2} & \simeq &
V(2\varpi_n) \oplus V(\varpi_{n-2}) \oplus V(\varpi_{n-4}) \oplus \cdots
\\
\\
V(\varpi_n)\otimes V(\varpi_{n-1}) & \simeq &
V(\varpi_n + \varpi_{n-1}) \oplus V(\varpi_{n-3}) \oplus V(\varpi_{n-5}) \oplus
\cdots
\end{array}
\]
all these Weyl modules remaining simple modulo $p$ as soon as $p > 2$
(again, see \cite[II.8.21]{Jan} and \cite[Remark 3.4]{McNinch}).
Hence we can generate all fundamental tilting modules when $p > 2$.

\subsubsection{Type $E_6$}

The minuscule weights are $\varpi_1$ and $\varpi_6$, and
the highest (short) root is $\varpi_2$. Moreover, we have
\[
\begin{array}{ccl}
\Lambda^2 V(\varpi_1) &\simeq& V(\varpi_3) \\
\Lambda^2 V(\varpi_6) &\simeq& V(\varpi_5) \\
\Lambda^2 V(\varpi_2) &\simeq& V(\varpi_4) \oplus V(\varpi_2)  
\end{array}
\]
and all these Weyl modules remain simple modulo $p$ as soon as $p > 3$
\cite{Jan-H1}. Hence we can generate all fundamental tilting modules
when $p > 3$.

\subsubsection{Type $E_7$}

The weight $\varpi_7$ is minuscule, and the highest (short) root is
$\varpi_1$. Moreover, we have
\[
\begin{array}{ccl}
V(\varpi_1)^{\otimes 2} &\simeq&
V(2\varpi_1) \oplus
V(\varpi_1) \oplus
V(\varpi_3) \oplus
V(\varpi_6) \oplus
V(0)
\\
\\
V(\varpi_6) \otimes V(\varpi_7) &\simeq&
V(\varpi_6 + \varpi_7) \oplus
V(\varpi_1 + \varpi_7) \oplus
V(\varpi_2) \oplus
\\
&&
\qquad
V(\varpi_5) \oplus
V(\varpi_7)
\\
\\
V(\varpi_5) \otimes V(\varpi_7) &\simeq&
V(\varpi_5 + \varpi_7) \oplus
V(\varpi_2 + \varpi_7) \oplus
V(\varpi_1 + \varpi_6) \oplus
\\
&&\qquad
V(\varpi_3) \oplus
V(\varpi_6) \oplus
V(\varpi_4)
\end{array}
\]
and all these Weyl modules remain simple modulo $p$ as soon as $p >
19$, because then all the weights involved lie in the fundamental
alcove.\footnote{We remark that $V(\varpi_1 + \varpi_7)$ is reducible modulo
$19$, according to \cite{Lub}.}
Thus we can generate all fundamental tilting modules when $p > 19$.

\subsubsection{Type $E_8$}

There is no minuscule weight. The highest (short) root is $\varpi_8$.
We have
\[
\begin{array}{ccl}
V(\varpi_8)^{\otimes 2} &\simeq&
V(2\varpi_8) \oplus V(\varpi_7) \oplus V(\varpi_1) \oplus V(\varpi_8)
\oplus V(0)
\\
\\
V(\varpi_7) \otimes V(\varpi_8) &\simeq&
V(\varpi_7 + \varpi_8) \oplus
V(\varpi_1 + \varpi_8) \oplus
V(2\varpi_8)  \oplus 
\\
&&
\qquad
V(\varpi_8) \oplus
V(\varpi_7) \oplus
V(\varpi_6) \oplus
V(\varpi_2) \oplus
V(\varpi_1)
\\
\\
V(\varpi_6) \otimes V(\varpi_8) &\simeq&
V(\varpi_6 + \varpi_8) \oplus
V(\varpi_7 + \varpi_8) \oplus
V(\varpi_2 + \varpi_8) \oplus
\\
&&
\qquad
V(\varpi_1 + \varpi_8) \oplus
V(\varpi_1 + \varpi_7) \oplus
V(\varpi_7) \oplus
\\
&&
\qquad
V(\varpi_6) \oplus
V(\varpi_5) \oplus
V(\varpi_3) \oplus
V(\varpi_2)
\\
\\
V(\varpi_5) \otimes V(\varpi_8) &\simeq&
V(\varpi_5 + \varpi_8) \oplus
V(\varpi_1 + \varpi_6) \oplus
V(\varpi_2 + \varpi_8) \oplus
\\
&&
\qquad
V(\varpi_6 + \varpi_8) \oplus
V(\varpi_1 + \varpi_2) \oplus
V(\varpi_3 + \varpi_8) \oplus
\\
&&
\qquad
V(\varpi_2 + \varpi_7) \oplus
V(\varpi_7) \oplus
V(\varpi_6) \oplus
\\
&&
\qquad
V(\varpi_5) \oplus
V(\varpi_4) \oplus
V(\varpi_3)
\end{array}
\]
and all these Weyl modules remain simple modulo $p$ as soon as $p >
31$, because then all the weights involved lie in the fundamental
alcove.\footnote{
We remark that $V(2\varpi_8)$ is reducible modulo $31$, according to \cite{Lub}.
} Thus we can generate all fundamental tilting
modules when $p > 31$.

\subsubsection{Type $F_4$ }

The short dominant root is $\varpi_4$, and we have
\[
\begin{array}{ccl}
\Lambda^2 V(\varpi_4) &\simeq&
 V(\varpi_1) \oplus V(\varpi_3)\\
\Lambda^3 V(\varpi_4) &\simeq&
 V(\varpi_2) \oplus V(\varpi_1 + \varpi_4) \oplus V(\varpi_3)
\end{array}
\]
and all these Weyl modules remain simple modulo $p$ as soon as
$p > 3$ \cite{Jan-H1,Lub}.

So, for $p > 3$, we can get $T(\varpi_1)$ and $T(\varpi_3)$ as direct
summands of $T(\varpi_4)^{\otimes 2}$, and $T(\varpi_2)$ as a direct
summand of $T(\varpi_4)^{\otimes 3}$.

\subsubsection{Type $G_2$}

The short dominant root is $\varpi_1$. We have
\[
\Lambda^2 V(\varpi_1) \simeq V(\varpi_1) \oplus V(\varpi_2),
\]
and these Weyl modules remain simple modulo $p$ for $p > 3$.
Thus we can generate all fundamental tilting modules when $p > 3$.

\end{proof}


\subsection{Arbitrary weights}

We can now complete the proof of our main theorem.

\begin{proof}[Proof of Theorem~\ref{thm:T=P}]
Recall that any dominant weight $\lambda \in \Lambda^+$ can be expressed as 
\[ \lambda = \sum_i a_i \varpi_i \]
for non-negative integers $a_i$.  By Proposition~\ref{prop-fund}, for every fundamental weight $\varpi_i$, $\TC(\varpi_i) = \EC(\varpi_i)$.  Thus the tensor product $\bigotimes T(\varpi_i)^{\otimes a_i}$ corresponds to a perverse sheaf that is also parity (as it is a convolution of parity sheaves) and tilting (by Lemma~\ref{lem-obser}).  The tensor product is therefore tilting and as it is of highest weight $\lambda$, contains $T(\lambda)$ as a direct summand.  We conclude that $\TC(\lambda)$ is a summand of the corresponding (parity) convolution product and thus $\TC(\lambda) = \EC(\lambda)$.
\end{proof}

\subsection{A remark on the bound in type $C_{n}$}

We do not know in all cases the exact bound on $p$ for which the
property $(*)$ holds. Recall that if Conjecture~\ref{conj-mv} is
true, then $(*)$ holds whenever $p$ is a good prime.

Stephen Donkin has pointed out to us that in type $C_n$ there exist $p$ with $2 < p \leq n$ such that not all indecomposable tilting modules can be obtained as direct summands of tensor products of the minuscule tilting module $T(\varpi_1)$ and $T(\alpha_0)$.  Thus, while it is possible that $(*)$ may hold in type
$C_n$ for all $p > 2$, a different method of proof will be required. Let us explain Donkin's argument.
 
So consider $\GB = \Sp_{2n}$, and assume $p > 2$.
The only minuscule fundamental weight is $\varpi_{1}$, and
$E := L(\varpi_{1}) = T(\varpi_{1})$ is the natural $\GB$-module, of
dimension $2n$. Now the highest short root is $\varpi_{2}$, and 
$T(\varpi_{2})$ is a direct summand of $\Lambda^2 E$ which itself is a
direct summand of  $E^{\otimes 2}$ since $p > 2$. Hence the question
is whether every indecomposable tilting module is a direct summand of
a tensor power of $E$.

Now assume $p$ divides $n$. Thus $p$ divides $\dim E$. By
\cite[Proposition 2.2]{Benson-Carlson} (which is stated for
representations of finite groups but is valid with the same proof
here), any indecomposable summand of a tensor power $E^{\otimes a}$
(with $a \geq 1$) also has dimension divisible by $p$. Hence an
affirmative answer would imply that all indecomposable tilting modules
except $T(0) = k$ have dimension divisible by $p$.

From this it would follow by induction that the dimension of each
standard module $\nabla_\lambda$, where $\lambda$ is not in the
principal block, would be divisible by $p$. Indeed, there is a short
exact sequence 
\[
0 \longto R \longto T(\lambda) \longto \nabla_\lambda \longto 0
\]
where $R$ is filtered by $\nabla_\mu$'s with $\mu < \lambda$. Hence by
induction, $p$ divides $\dim R$, and by hypothesis also $\dim
T(\lambda)$, hence also $\dim \nabla_\lambda$.  The case where $R =
0$ is the base of the induction: then $\nabla_\lambda = T(\lambda)$,
and $\lambda \neq 0$, since $\lambda$ is not in the principal block.

Now by Weyl's dimension formula, for any $m \in \NM$ we have 
\[\dim \nabla_{(m - 1)\rho_{\GB}} = m^{n^2}\]
(in general type, one gets
$m^{|\Phi^{+}|}$). Taking $m$ prime to $p$, this would force
$(m - 1)\rho_{\GB}$ to be in the principal block. In particular we could take $m = 2$ (since $p > 2$), and so $\rho_{\GB}$ itself should be in the principal block. However this is not true when $n$ is congruent to $1$ or $2$ modulo $4$, since in the basis of roots the coefficient of $\rho$ along $\alpha_n$ is $\frac 1 4 m(m + 1)$.

Since we have reached a contradiction, we cannot sharpen the bound of the Proposition in type $C_{n}$. 


\section{$q$-Characters for tilting modules}
\label{subsec-qchar}

When $p$ satisfies the conditions of the Theorem \ref{thm:T=P}
  we are able to deduce that the character of a tilting module has a
  natural graded refinement. More precisely,

\begin{cor} Suppose that $\TC(\lambda)=\EC(\lambda)$.
  Then the total cohomology of the stalk of
  the tilting sheaf $\TC(\lambda)$ at a point in $\Gr^\nu$ has the
  same dimension as the weight space $T(\lambda)^\nu$. Thus the
  dimension of the weight space has a natural graded refinement.
\end{cor}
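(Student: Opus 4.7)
The plan is to interpret $\dim T(\lambda)^\nu$ via the Mirkovi\'c--Vilonen weight functor, realised as the hyperbolic localization $R^{\GD}_{\TD}$ of Section~\ref{subsec-hyperlocGr}, and then to identify its value at $t^\nu$ with the total cohomology of the stalk of $\TC(\lambda)$ at a point of $\Gr^\nu$. Under geometric Satake, $R^{\GD}_{\TD}$ corresponds to restriction $\Rep(\GB) \to \Rep(\TB)$; since it is $t$-exact and $\TC(\lambda)$ is perverse, $R^{\GD}_{\TD}(\TC(\lambda))$ is a perverse sheaf on the discrete variety $\Gr_{\TD}$ whose $\nu$-component is a vector space in a single degree of dimension $\dim T(\lambda)^\nu$. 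Under the hypothesis $\TC(\lambda) = \EC(\lambda)$, Theorem~\ref{thm-parityres} (with $\LB = \TB$) moreover shows that $R^{\GD}_{\TD}(\EC(\lambda))$ is parity on $\Gr_{\TD}$.

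Next I would unwind the shift in the definition of $R^{\GD}_{\TD}$ and invoke Braden's theorem (Theorem~\ref{thm:Braden}): up to the shift, the $\nu$-component of $R^{\GD}_{\TD}(\EC(\lambda))$ is the compactly-supported cohomology of the semi-infinite attracting orbit at $t^\nu$ with coefficients in $\EC(\lambda)$. This attracting orbit is stratified by Mirkovi\'c--Vilonen cells $S_\nu \cap \Gr^\mu$, each an affine space; on each cell, $\EC(\lambda)$ restricts, by $\GD(\OC)$-equivariance combined with the parity hypothesis, to a shifted constant sheaf concentrated in a single parity (using that the orbits of $\Gr$ are simply connected when the characteristic is not torsion). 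The parity condition forces degeneration of the spectral sequence of the cell filtration computing this $H^*_c$, and a cell-dimension bookkeeping identifies its total dimension with the total dimension of the ordinary stalk $H^*(\EC(\lambda)_{t^\nu})$; by $\GD(\OC)$-equivariance, the latter coincides with the total cohomology of the stalk at any other point of $\Gr^\nu$.

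Combining the two identifications yields the equality
\[
\sum_i \dim H^i(\TC(\lambda)_x) \;=\; \dim T(\lambda)^\nu,
\]
and the natural graded refinement is the Poincar\'e polynomial $\sum_i q^i \dim H^i(\TC(\lambda)_x)$ of the stalk, which specialises at $q = 1$ to the weight multiplicity. The main obstacle is the cell-dimension bookkeeping in the second step, i.e., the identification of $H^*_c$ of the attracting orbit with the stalk total dimension; this is the parity analogue of the classical characteristic-zero identification of local intersection-cohomology stalk dimensions with weight multiplicities of simple modules.
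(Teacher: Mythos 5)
Your first step agrees with the paper: the weight multiplicity $\dim T(\lambda)^\nu$ is computed by the $\nu$-component of the hyperbolic localization $R^{\GD}_{\TD}(\TC(\lambda))$, which by $t$-exactness is a single vector space in one degree. But the entire content of the corollary is the bridge from that hyperbolic localization back to the \emph{ordinary stalk} of $\TC(\lambda)$ at $t^\nu$, and this is precisely the step you defer as ``the main obstacle'' and then propose to fill by a cell-dimension bookkeeping on the attracting set. That route does not go through as stated. The attracting set at $t^\nu$ is the semi-infinite orbit, and its intersections with the $\GD(\OC)$-orbits $\Gr^\mu$ are \emph{not} affine spaces; they are equidimensional varieties whose irreducible components are the Mirkovi\'c--Vilonen cycles, and it is the number of components (not a count of cells) that governs multiplicities. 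More fundamentally, even granting a nice stratification, there is no spectral-sequence argument that identifies the total dimension of $H^*_c$ of the whole attracting set with the total dimension of the stalk at the single fixed point $t^\nu$: these are genuinely different invariants of $\EC(\lambda)$, and their equality is the theorem, not an exercise in bookkeeping.

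The missing idea is Braden's comparison of local Euler characteristics (\cite[Prop.~3]{Br}): for a weakly equivariant complex $\FC$, the Euler characteristic of the stalk at a torus fixed point equals the Euler characteristic of (the corresponding component of) any hyperbolic localization of $\FC$. Applied here, $\chi\bigl(\TC(\lambda)_{t^\nu}\bigr)$ equals, up to sign, $\dim T(\lambda)^\nu$, because the hyperbolic localization is concentrated in a single degree. The hypothesis $\TC(\lambda)=\EC(\lambda)$ then enters exactly once more: parity of $\EC(\lambda)$ forces the stalk cohomology at $t^\nu$ to live in a single parity, so there is no cancellation and the total dimension of the stalk equals $|\chi|$. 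Equivariance extends this to any point of $\Gr^\nu$, and the grading on the stalk gives the $q$-refinement. Your proposal never invokes the Euler-characteristic invariance, which is why the gap cannot be closed along the lines you sketch.
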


\begin{proof}
Recall from \ref{subsec-hyperlocGr} that the weight
space functor $F_\nu$ corresponds under geometric Satake to the
summand $(\FC^{*!})_\nu[\langle \nu,2\rho \rangle ]$ of
$R^{\GD}_{\TD}$ supported at the $\TD$-fixed point $t^\nu \in \Gr^\nu$.

As explained in~\cite[Prop. 3]{Br}, the local Euler characteristic of
a sheaf $\FC$ at a torus fixed point $x$ is equal to the local Euler characteristic of
any hyperbolic localization of $\FC$. Therefore, the stalk of a
perverse sheaf in the Satake category at the point $t^\nu$
has an Euler characteristic of absolute value equal to the dimension
of the $\nu$-th weight space of the corresponding representation of
$\GD$. On the other hand, the cohomology of the stalk of the
parity sheaf $\TC(\lambda)$ is concentrated in even or odd degree,
thus its total dimension is equal to the dimension of the weight space
$T(\lambda)^\nu$.  The total cohomology of the stalk is graded and
thus the dimension of the weight space inherits a natural grading.
\end{proof}

\begin{remark} In characteristic zero (where the indecomposable tilting
  modules and simple modules coincide) the above $q$-analogue of weight
  multiplicity is due to Lusztig \cite{Lu2}. Lusztig shows that the
  $q$-characters of simple modules in characteristic zero are given by
  certain Kazhdan-Lusztig polynomials associated to the (extended)
  affine Weyl group. In fact, equipped with Lusztig's results, the
  $q$-characters of tilting modules can be deduced from the ordinary
  characters of the indecomposable tilting modules and Lustig's
  $q$-characters of simple modules in characteristic zero. Indeed, one
  can lift $\EC(\lambda)$ to a parity sheaf $\EC(\lambda, \OM)$ with
  coefficients in $\OM$, a complete local ring with residue field
  $k$. If $\KM$ denotes the fraction field of $\OM$ one has an
  isomorphism
\begin{equation} \label{p0}
\EC(\lambda, \OM) \otimes_{\OM} \KM \cong \bigoplus
\ic(\overline{\Gr^\mu}; \KM)^{\oplus m_{\mu,\lambda}}
\end{equation}
where $m_{\mu,\lambda}$ denotes the multiplicity of $\Delta_\mu$ in a
$\Delta$-flag on $T(\lambda)$. (We use that the parity sheaves with
coefficients in $\KM$ on the affine Grassmannian are the intersection
cohomology complexes and that $P(\Gr;\KM)$ is semi-simple). The
$q$-character of $\EC(\lambda)$ agrees with the $q$-character of
$\EC(\lambda, \OM)$, which in turn agrees with that of
\eqref{p0}. Hence on can deduce the $q$-character of $\EC(\lambda)$
once one knows that multiplicities $m_{\mu,\lambda}$ and the
$q$-characters in characteristic zero.
\end{remark}

\begin{remark}
In \cite{Bryl} R. Brylinski has shown that Lusztig's $q$-analogue of
weight multiplicity can be interpreted in terms of a filtration on
each weight space coming from the action of a principal nilpotent
element. It would be interesting to find a similar interpretation for
the $q$-character of tilting modules.
\end{remark}

\section{Appendix}
\label{sec-appendix}

Recall that a $\GB$-module $V$ is said to admit a good filtration if
it admits a filtration with successive quotients isomorphic to the induced
modules $\nabla_\lambda$.

In the references, the Theorems~\ref{thm-tensor} and~\ref{thm-res} are
formulated as follows.

\begin{thm}
\label{thm-tensorgood}
 If $V$ and $V'$ are $\GB$-modules admitting a good
  filtration, then so is $V \otimes V'$.
\end{thm}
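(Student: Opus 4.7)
The plan is to deduce Theorem~\ref{thm-tensorgood} from Theorem~\ref{thm-tensor} by passing through finite tilting resolutions, using Proposition~\ref{prop-resol} as the dictionary between the two formulations. Recall that in the setting of Section~\ref{subsec-hwc}, the category of rational $\GB$-modules is a highest weight category whose costandard objects are the induced modules $\nabla_\lambda$, so that $\FC(\nabla)$ is precisely the class of modules admitting a good filtration.

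First I would invoke Proposition~\ref{prop-resol} on each of $V$ and $V'$ to obtain finite left tilting resolutions
\[
0 \to T^{-n} \to \cdots \to T^{-1} \to T^{0} \to V \to 0,
\qquad
0 \to T'^{-m} \to \cdots \to T'^{-1} \to T'^{0} \to V' \to 0,
\]
with each $T^{i}$ and $T'^{j}$ a tilting module. Then I would form the total complex of the double complex $T^{\bullet} \otimes T'^{\bullet}$. Since the coefficient ring is the field $k$, the tensor product $- \otimes_{k} -$ is exact in each variable, and so a standard spectral sequence (or direct) argument shows that this total complex is quasi-isomorphic to $V \otimes V'$ concentrated in degree zero. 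In other words, we obtain a finite left resolution of $V \otimes V'$ whose terms are finite direct sums of modules of the form $T^{i} \otimes T'^{j}$.

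Next, I would apply Theorem~\ref{thm-tensor} to conclude that each $T^{i} \otimes T'^{j}$ is tilting, hence so is every term of the resolution just constructed. A second application of Proposition~\ref{prop-resol}, now in the reverse direction, then yields $V \otimes V' \in \FC(\nabla)$, which is exactly the statement that $V \otimes V'$ admits a good filtration.

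The entire content of the argument is therefore the equivalence ``good filtration $\Leftrightarrow$ finite tilting resolution'' provided by Proposition~\ref{prop-resol} combined with Theorem~\ref{thm-tensor}; nothing else is needed. The only place where anything non-formal could go wrong is in checking that the total complex of two finite tilting resolutions really is a resolution of $V \otimes V'$, but since we are tensoring over a field this reduces to the exactness of $- \otimes_{k} -$, so this is not a genuine obstacle. Thus the main (and only substantive) input is Theorem~\ref{thm-tensor}, which in this paper is established via Theorem~\ref{thm:T=P} together with Lemma~\ref{lem-obser}, so the appendix serves precisely to package the parity-sheaf result into its more traditional module-theoretic form.
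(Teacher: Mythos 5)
Your argument is correct and coincides with the paper's own proof of the implication in the appendix: take finite left tilting resolutions of $V$ and $V'$ via Proposition~\ref{prop-resol}, observe that the tensor product of these resolutions is a finite left tilting resolution of $V\otimes V'$ (using Theorem~\ref{thm-tensor} for the terms), and apply Proposition~\ref{prop-resol} again. Your added remark that exactness of the total complex follows from $-\otimes_k-$ being exact over the field $k$ (a Künneth argument) is a useful clarification of a step the paper leaves implicit, but it does not change the approach.
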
 

\begin{thm}
\label{thm-resgood}
 Let $\LB$ be a Levi subgroup of $\GB$. If $V$ is a
  $\GB$-module with a good filtration, then $V$ has also a good
  filtration when considered as an $\LB$-module.
\end{thm}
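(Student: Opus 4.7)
The plan is to deduce Theorem~\ref{thm-resgood} from the already-established tilting version, Theorem~\ref{thm-res}, by leveraging Donkin's characterization of good-filtered modules via finite tilting resolutions (Proposition~\ref{prop-resol}). The whole argument is formal and purely categorical, once one has the tilting statement for the restriction functor in hand.

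Concretely, the first step is to invoke Proposition~\ref{prop-resol} applied to the highest weight category of rational $\GB$-modules: the hypothesis that $V$ has a good filtration is equivalent to the existence of a finite left tilting resolution
\[
0 \longto T_{n} \longto T_{n-1} \longto \cdots \longto T_{0} \longto V \longto 0,
\]
with each $T_{i}$ an (indecomposable, after splitting) tilting $\GB$-module. Next, I would apply the restriction functor $\Res^{\GB}_{\LB}$, which is exact since it does nothing to the underlying vector spaces, and obtain an exact sequence
\[
0 \longto \Res^{\GB}_{\LB} T_{n} \longto \cdots \longto \Res^{\GB}_{\LB} T_{0} \longto \Res^{\GB}_{\LB} V \longto 0
\]
of rational $\LB$-modules. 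By Theorem~\ref{thm-res}, each $\Res^{\GB}_{\LB} T_{i}$ is tilting for $\LB$, so this is a finite left tilting resolution of $\Res^{\GB}_{\LB} V$ in the highest weight category of rational $\LB$-modules. Finally, applying Proposition~\ref{prop-resol} now in the reverse direction for $\LB$ yields that $\Res^{\GB}_{\LB} V$ admits a good filtration, which is the desired conclusion.

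I do not anticipate any substantive obstacle: the two nontrivial inputs, Theorem~\ref{thm-res} and Proposition~\ref{prop-resol}, are both already available. The only thing to be careful about is keeping track of which highest weight category one is working in at each stage, and invoking Proposition~\ref{prop-resol} in both directions. For completeness one would sketch the converse direction --- that Theorem~\ref{thm-resgood}, together with its evident analogue for Weyl filtrations (obtained by duality, since duality interchanges good and Weyl filtrations and sends tilting modules to tilting modules), implies Theorem~\ref{thm-res} --- by observing that a module is tilting precisely when it admits both a good and a Weyl filtration, and then applying the two restriction statements termwise. This confirms the equivalence of the tilting and good-filtration formulations promised in the footnote to Section~\ref{subsec-redgps}.
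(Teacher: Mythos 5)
Your proposal is correct and follows essentially the same route as the paper's appendix: take a finite left tilting resolution of $V$ via Proposition~\ref{prop-resol}, apply the exact functor $\Res^{\GB}_{\LB}$, invoke Theorem~\ref{thm-res} termwise, and apply Proposition~\ref{prop-resol} again for $\LB$. Your sketch of the converse (via duality exchanging good and Weyl filtrations) likewise matches the paper's argument.
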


The aim of this appendix is to show that these formulations are
equivalent:

\begin{thm}
Theorem~\ref{thm-tensor} is equivalent to Theorem~\ref{thm-tensorgood}.
\end{thm}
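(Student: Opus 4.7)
The plan is to use Proposition~\ref{prop-resol} as the bridge between the two formulations, together with the fact that tilting modules admit both a good and a Weyl filtration by definition.

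\textbf{Theorem~\ref{thm-tensor} $\Rightarrow$ Theorem~\ref{thm-tensorgood}.}
Suppose tensor products of tilting modules are tilting, and let $V,V'$ be $\GB$-modules with good filtrations. By Proposition~\ref{prop-resol} (applied to $\CC = \Rep(\GB)$ with the standard highest weight structure) there exist finite left tilting resolutions
\[
0 \to T^{-n} \to \cdots \to T^{0} \to V \to 0,
\qquad
0 \to T'^{-m} \to \cdots \to T'^{0} \to V' \to 0.
\]
Since tensoring with a module (over a field) is exact, the total complex of the double complex $T^{\bullet}\otimes T'^{\bullet}$ is a finite left resolution of $V \otimes V'$. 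Its terms are finite direct sums of modules of the form $T^{i}\otimes T'^{j}$, which are tilting by Theorem~\ref{thm-tensor}. Proposition~\ref{prop-resol} applied in the other direction then shows that $V \otimes V'$ has a good filtration.

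\textbf{Theorem~\ref{thm-tensorgood} $\Rightarrow$ Theorem~\ref{thm-tensor}.}
Let $T,T'$ be tilting modules. By definition each of them carries both a good and a Weyl filtration, so Theorem~\ref{thm-tensorgood} immediately produces a good filtration on $T \otimes T'$. To obtain a Weyl filtration we invoke the standard contravariant duality $M \mapsto M^{\tau}$ on rational $\GB$-modules (coming from an anti-involution fixing $\TB$), which exchanges $\Delta_\lambda \leftrightarrow \nabla_\lambda$, preserves tilting modules (in fact every indecomposable tilting module is self-dual), and commutes with tensor products. Applying Theorem~\ref{thm-tensorgood} to the tilting modules $T^{\tau}$ and $T'^{\tau}$, which again have good filtrations, yields a good filtration on $(T \otimes T')^{\tau} \cong T^{\tau}\otimes T'^{\tau}$; dualizing back provides a Weyl filtration on $T \otimes T'$. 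Hence $T \otimes T'$ is tilting.

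\textbf{Main obstacle.}
The only non-formal input is Proposition~\ref{prop-resol}, which is already quoted, and the existence of the duality $\tau$ with the properties listed. The latter is well known (it is for instance the duality used throughout \cite{Donkin:book}), and the remainder of the argument is a routine manipulation of resolutions and filtrations; no genuine obstacle arises.
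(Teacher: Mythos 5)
Your proof is correct and follows essentially the same route as the paper: the forward direction ($\Rightarrow$ Theorem~\ref{thm-tensorgood}) uses Proposition~\ref{prop-resol} to reduce to finite tilting resolutions, tensors them, and applies the proposition again; the converse uses that tilting modules have good filtrations together with the contravariant duality commuting with tensor product to produce the Weyl filtration. The only difference is cosmetic — you spell out the duality and the exactness of the tensor product a little more explicitly than the paper does.
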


\begin{thm}
Theorem~\ref{thm-res} is equivalent to Theorem~\ref{thm-resgood}.
\end{thm}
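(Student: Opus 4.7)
The plan is to derive both implications from Proposition~\ref{prop-resol} together with the elementary facts that restriction $\Res^\GB_\LB$ is an exact functor and commutes with (finite-dimensional) linear duality.

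First I would prove that Theorem~\ref{thm-res} implies Theorem~\ref{thm-resgood}. Given a $\GB$-module $V$ with a good filtration, Proposition~\ref{prop-resol} provides a finite left resolution
\[
\cdots \longto T_{1} \longto T_{0} \longto V \longto 0
\]
by tilting $\GB$-modules. Restricting to $\LB$ preserves exactness, and by Theorem~\ref{thm-res} each $\Res^\GB_\LB T_{i}$ is tilting for $\LB$. A second application of Proposition~\ref{prop-resol} (now over $\LB$) then shows that $\Res^\GB_\LB V$ lies in $\FC(\nabla)$, i.e.~admits a good filtration as an $\LB$-module.

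For the converse, let $T$ be a tilting $\GB$-module, so that $T$ admits both a good filtration and a Weyl filtration. Applying Theorem~\ref{thm-resgood} directly yields a good filtration of $\Res^\GB_\LB T$. To produce a Weyl filtration I would appeal to duality: since $\nabla_{\lambda}^{*} \cong \Delta_{-w_{0}\lambda}$, the linear dual $T^{*}$ of a tilting module again carries a good filtration (in fact $T^{*}$ is itself tilting). Theorem~\ref{thm-resgood} applied to $T^{*}$ yields a good filtration on $\Res^\GB_\LB(T^{*}) \cong (\Res^\GB_\LB T)^{*}$, and dualizing once more produces a Weyl filtration on $\Res^\GB_\LB T$. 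Combining the two filtrations shows that $\Res^\GB_\LB T$ is tilting over $\LB$.

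The only points requiring verification are standard features of the highest weight structures on $\Rep(\GB)$ and $\Rep(\LB)$: exactness of $\Res^\GB_\LB$, its compatibility with linear duality, and the identification of the dual of a costandard module for $\GB$ with a standard module. I do not anticipate any real obstacle; the crux of the argument is already packaged into Proposition~\ref{prop-resol}, which converts statements about $\Delta$- or $\nabla$-filtrations into statements about resolutions by tilting modules, thereby allowing the restriction functor to move between the two formulations.
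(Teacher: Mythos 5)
Your proof is correct and follows essentially the same strategy as the paper's: for Theorem~\ref{thm-res} $\Rightarrow$ Theorem~\ref{thm-resgood} you use Proposition~\ref{prop-resol} plus exactness of restriction, and for the converse you use that duality swaps good and Weyl filtrations and commutes with restriction. The only cosmetic difference is that the paper leaves the duality step implicit ("$\Res$ commutes with duality") while you spell it out via $\nabla_{\lambda}^{*}\cong\Delta_{-w_{0}\lambda}$; both are the same argument.
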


\begin{proof}
Suppose Theorems~\ref{thm-tensorgood} and~\ref{thm-resgood} are true.
If $T$ and $T'$ are tilting $\GB$-modules, then in particular they admit good
filtrations.  Thus $T \otimes T'$ also admits a good filtration, as
does $\Res^\GB_\LB T$.  On the other hand, tensor product and
$\Res_\LB^\GB$ both commute with duality.  We conclude that the
$T \otimes T'$ and $\Res^\GB_\LB T$ are tilting.

\medskip

Conversely, suppose that Theorems~\ref{thm-tensor} and~\ref{thm-res}
are true.  Let $V$ and $V'$ be $\GB$-modules that admit good
filtrations.  By Proposition~\ref{prop-resol}, they both admit finite
left resolutions by tilting modules.  The tensor product of
these resolutions is a finite left resolution of $V \otimes V'$
by tilting modules.  Applying Proposition~\ref{prop-resol} 
again, we conclude that $V \otimes V'$ admits a good filtration.

Similarly, as $\Res_\LB^\GB$ is exact and by assumption takes tilting
modules to tilting modules, $\Res_\LB^\GB$ applied to a finite left resolution
of $V$ by tilting modules is a finite left resolution of $\Res_\LB^\GB V$ by
tilting modules.  We conclude that $\Res_\LB^\GB V$ admits a good filtration.
\end{proof}

\def\cprime{$'$} \def\cprime{$'$}


\end{document}